\tikzset{
  LabelStyle/.style = {font = \tiny\bfseries },
  VertexStyle/.append style = { inner sep=5pt,
                                font = \tiny\bfseries},
  EdgeStyle/.append style = {->} }
\newcommand{\overundercup}[2]{{\underset{#1}{\overset{#2}{\cup}}}}
\newtheorem{theorem}{Theorem}[section]
\newtheorem*{theorem A}{Theorem A}
\newtheorem*{theorem B}{N\"olker's Theorem}
\newtheorem{lemma}{Lemma}[section]
\theoremstyle{remark}
\theoremstyle{remark}
\newtheorem{claim}{Claim}
\begin{document}

\begin{frontmatter}
\papertitle{\center{\vspace*{-5cm}A modification of \\\vspace*{0.26cm}two graph-decomposition theorems\\\vspace*{0.1cm} based on\\\vspace*{0.26cm} a vertex-removing synchronised graph product}}



\author[label2]{Antoon H. Boode}

\address[label2]{\small {Robotics Research Group, \\InHolland University of Applied Science\\ Alkmaar, the Netherlands}

\vspace*{2.5ex} 
 \normalfont ton.boode@inholland.nl\\
 \normalfont https://orcid.org/0000-0001-5119-6275}
\begin{abstract}
\noindent
Recently, we have introduced two graph-decomposition theorems based on a new graph product (the vertex-removing synchronised product (VRSP)), motivated by applications in the context of synchronising periodic real-time processes. 
In these applications, periodic real-time processes synchronise over actions that have the same label and therefore the same behaviour.
From a process-algebraic point of view, such a synchronising action is executed atomically and at the same time by all processes that have this action in their alphabet.
When these processes are executed on some computer platform, synchronisation leads to context switches of the processes and therefore an increased overhead, which may lead to deadline misses.
But, by combining processes we reduce the number of context switches and therefore reduce the overhead.
We combine these processes by representing the processes by edge-labelled acyclic directed multigraphs, and multiply the graphs by the VRSP.
Next, we transpose the resulting graphs into processes for which there are fewer context switches.
An important aspect of these real-time applications is that they must execute in time. Still, it may happen that the set of processes of the application cannot execute timely and may miss a deadline. Now, by decomposing the graphs and multiplying the graphs by the VRSP in another combination, the processes that are represented by these recombined graphs may execute in time.
The requirements of the recently introduced graph-decomposition theorems are too strict and can be relaxed, whereby more graphs can be decomposed giving more possible combinations for the real-time application.
Therefore, we recall the definition of the VRSP and the two graph-decomposition theorems, we relax the requirements by stating and proving a lemma that decomposes bipartite graphs and use this lemma to state and prove the two (relaxed) graph-decomposition theorems.

\end{abstract}

\begin{keyword}
Vertex Removing Synchronised Graph Product 
\sep Product Graph
\sep Graph Decomposition
\sep Synchronising Processes

Mathematics Subject Classification : 05C76, 05C51, 05C20, 94C15


\end{keyword}

\end{frontmatter}
\section{Introduction}\label{sec:intro}
Recently, we have introduced two graph-decomposition theorems based on a new graph product~\cite{dam}, motivated by applications in the context of synchronising periodic real-time processes, in particular in the field of robotics.
More on the background, definitions, and applications can be found in two conference contributions \cite{boode2014cpa, boode2013cpa}, two journal papers \cite{dam,ejgta} and the thesis of the 
author~\cite{boodethesis}.
In this contribution, we relax some of the requirements of the two graph-decomposition theorems presented in \cite{dam} for which we present a new lemma (Lemma~\ref{lemma1}) that takes bipartite graphs into account.
The lemma is used to relax the requirements of the theorems in~\cite{dam} so we can state and prove the two relaxed decomposition theorems.
Also, we repeat most of the background, definitions, and theorems presented in \cite{dam} here for convenience.
Furthermore, the proofs of Lemma~\ref{lemma1}, Theorem~\ref{theorem_3} and Theorem~\ref{theorem_4} are modelled along the same lines as the proofs of the theorems presented in \cite{dam}.

In \cite{boode2013cpa}, we have modelled periodic real-time processes as directed acyclic labelled multigraphs. These graphs are closely related to state transition systems~\cite{Katoen}. 
The vertices of such a graph represent the states of a periodic real-time process, while the labelled arcs represent actions, i.e., transitions from one state to another. 
The label (in fact, a label pair) on an arc represents the name or type of the action together with the worst-case duration of its execution.
 We give the formal definitions of these graphs in Section~\ref{sec:term}.

Embedded control systems play a crucial role in many application areas. 
In particular, in the field of robotics, it is obvious that these systems (embedded in robots) are key to the functionality and operational behaviour of robots. 
The software of such control systems is usually designed using a general-purpose computing system (not in the robot). These general-purpose computers generally have more processing power and memory available than the embedded control system.
The embedded control system is the target system on which the software will run eventually after it has been designed and validated.
The hardware of the target system is usually much more limited with respect to available memory and processing power.
If the processes that have to run on the target system are periodic and real-time, they have deadlines to fulfil the timing requirements, and they require memory for storing the data and software.

Periodic real-time (robotic) applications can be designed using process algebras like, for example, a calculus of communicating systems~\cite{Milner}, communicating sequential processes~\cite{Hoare}, micro Common Representation Language 2~\cite{Grootte} and finite-state processes~\cite{Magee:2000:CSM:332036}.
During the design phase, the designer distributes the required behaviour over sometimes more than a hundred processes. 
These processes very often synchronise over actions, e.g., to assert whether a subset of the processes will be ready to start executing at the same time.
Due to this synchronisation, such applications usually suffer from a considerable overhead related to so-called  context switches.

In \cite{boode2013cpa}, the vertex-removing synchronised product (VRSP) has been introduced as a means to reduce the number of context switches. This
VRSP is a modification of the well-known Cartesian product of graphs. It is based on the synchronised product due to W\"ohrle and Thomas~\cite{Wohrle04modelchecking}, which is used in model-checking synchronised products of transition systems.

The VRSP reduces the number of context switches and in many cases realises a performance gain for periodic real-time applications.
This is achieved by (repetitively) combining two graphs representing two processes that synchronise over some action.
The combined graph of two graphs then represents a process that will have only one context switch per synchronising action, whereas the two processes separately would each have one context switch per synchronising action \cite{boode2013cpa}.

Using the VRSP, the set of graphs representing a set of different processes can, under certain conditions, be transformed into a new set of graphs. This can be particularly useful if the original set of graphs represents a set of processes that cannot meet their deadline or do not fit into the available memory. 
The aim is that for such a new set of graphs, the processes that they represent meet their deadline and fit into the available memory.  In the worst case, there may be no set of processes with respect to the original set of processes that will do so. In that case, the VRSP cannot result in a suitable solution when applied in any way to the graphs representing the original set of processes. 

One way out, for which we introduce and develop the tools here, is to use the VRSP to enable new combinations of subprocesses of the original set of processes, without changing the functionality and behaviour of the total set of new (sub)processes. 
We accomplish this by decomposing a graph $G$ (representing one of the processes) into two smaller graphs $G_1$ and $G_2$ such that the VRSP of $G_1$ and $G_2$ is isomorphic to $G$. 
It should be noted here, that the graphs $G_1$ and $G_2$ are not subgraphs of $G$, but that they are obtained from $G$ by applying a contraction operation, to be specified later.  

The decomposition of graphs is well known in the literature.
For example, decomposition can be based on the partition of a graph into edge-disjoint subgraphs. 
In our case, in the two graph-decomposition theorems we contract disjoint nonempty subsets of the vertex set $V$ of the edge-labelled acyclic directed multigraph~$G$.
The contraction of a nonempty set $X\subset V$ leads to a graph $G/X$ where all the vertices of $X$ are replaced by one vertex $\tilde{x}$, each arc $uv, u\in V\backslash X, v\in X$ is replaced by an arc $u\tilde{x}$ with $\lambda(u\tilde{x})=\lambda(uv)$, each arc $uv, u\in X, v\in V\backslash X$ is replaced by an arc $\tilde{x}v$ with $\lambda(\tilde{x}v)=\lambda(uv)$, and the arcs with both ends in $X$ are removed.

In the first theorem, we have disjoint nonempty sets $X\subset V$ and $Y=V\backslash X$, giving $G/X$ and $G/Y$.
In the second theorem, we have mutually disjoint nonempty sets $X_1\subset V,X_2\subset V$ and $Y=V\backslash (X_1\cup X_2)$ giving $G/X_1/X_2$ and $G/Y$, where $G/X_1/X_2$ is shorthand for $(G/X_1)/X_2$.
Then, together with additional constraints given in the theorems, we have that $G$ is isomorphic to the VRSP of $G/X$ and $G/Y$ in the first theorem and that $G$ is isomorphic to the VRSP of $G/X_1/X_2$ and $G/Y$ in the second theorem.

In this paper, we recall the definition of the VRSP and the two graph-decomposition theorems given in \cite{dam} and we relax the requirements of these two graph-decomposition theorems. 
For the first theorem, the requirement was that for the arcs that have one end in $X$ and the other end in $Y$ (the set of arcs $[X,Y]$) the label of each arc is distinct.
We relax this requirement in the following manner.
The set of all arcs in $[X,Y]$ with the same label must arc-induce (defined in Section~\ref{sec:term}) a complete bipartite graph. 
For the second theorem, the requirement was that for the arcs that have one end in $X_1$ and the other end in $Y$ (the set of arcs $[X_1,Y]$), the arcs that have one end in $Y$ and the other end in $X_2$ (the set of arcs $[Y,X_2]$) and the arcs that have one end in $X_1$ and the other end in $X_2$ (the set of arcs $[X_1,X_2]$) the label of each arc is distinct.
We relax this requirement in the following manner.
The set of all arcs in $[X_1,Y]$ with the same label must arc-induce a clean bipartite graph (defined in Section~\ref{sec:term}) and the set of all arcs in $[Y,X_2]$ with the same label must arc-induce a clean bipartite graph. 
Furthermore, the only restriction on the labels of the arcs in $[X_1,X_2]$ is that the arcs of $[X_1,X_2]$ must not have a label identical to a label of any of the arcs of $A(G)\backslash [X_1,X_2]$.

The rest of the paper is organised as follows.
In the next sections, we introduce new definitions that are necessary due to the relaxation of the two decomposition theorems.
Furthermore, we recall the formal graph definitions (in Section~\ref{sec:term}), the definition of the VRSP as well as the graph-decomposition theorems, together with other relevant terminology and notation (in Section~\ref{Terminology_products}), the notions of graph isomorphism and contraction of labelled acyclic directed multigraphs (in Section~\ref{Terminology_morphisms}), and the two graph theorems given in~\cite{dam} (in Section~\ref{twographtheorems}).
We relax the two decomposition theorems from \cite{dam} and state and proof a lemma decomposing bipartite graphs. 
We use the VRSP, the new lemma and the two decomposition theorems to state and prove the two (relaxed) decomposition theorems (in Section~\ref{sec:decomprev}).

\section{Terminology and notation}\label{sec:term}
We use the textbook of Bondy and Murty \cite{GraphTheory} for terminology and notation we do not specify here. 
Throughout, unless we specify explicitly that we consider other types of graphs, all graphs we consider are {\em edge-labelled acyclic directed multigraphs\/}, i.e., they may have multiple labelled arcs. Such graphs consist of a {\em vertex set\/} $V$ (representing the states of a process), an {\em arc set\/}  $A$ (representing the actions, i.e., transitions from one state to another), a set of {\em labels\/} $L$ (in our applications, a set of label pairs, each representing a type of action and the worst-case duration of its execution), and two mappings.
The first mapping $\mu: A\rightarrow V\times V$ is an incidence function that identifies the {\em tail\/} and {\em head\/} of each arc $a\in A$. 
In particular, $\mu(a)=(u,v)$ means that the arc $a$ is directed from $u\in V$ to $v\in V$, where $tail(a)=u$ and $head(a)=v$. We also call $u$ and $v$ the {\em ends\/} of $a$. 
The second mapping $\lambda :A\rightarrow L$ assigns a label pair $\lambda(a)=(\ell(a),t(a))$ to each arc $a\in A$, where $\ell(a)$ is a string representing the (name of an) action and $t(a)$ is the {\em weight\/} of the arc $a$.
This weight $t(a)$ is a real positive number representing the worst-case execution time of the action represented by $\ell(a)$.

Let $G$ denote a graph according to the above definition.
An arc $a\in A(G)$ is called an {\em in-arc\/} of $v\in V(G)$ if $head(a)=v$, and an {\em out-arc\/} of $v$ if $tail(a)=v$. The {\em in-degree\/} of $v$, denoted by $d^-(v)$, is the number of in-arcs of $v$ in $G$; the {\em out-degree\/} of $v$, denoted by $d^+(v)$, is the number of out-arcs of $v$ in $G$.
The subset of $V(G)$ consisting of vertices $v$ with $d^-(v)=0$ is called the {\em source\/} of $G$, and is denoted by $S'(G)$. The subset of $V(G)$ consisting of vertices $v$ with $d^+(v)=0$ is called the {\em sink\/} of $G$, and is denoted by $S''(G)$.

For disjoint nonempty sets $X,Y\subseteq V(G)$, $[X,Y]$ denotes the set of arcs of $G$ with one end in $X$ and one end in $Y$. If the head of the arc $a\in [X,Y]$ is in $Y$, we call $a$ a {\em forward arc\/} (of $[X,Y]$); otherwise, we call it a {\em backward arc\/}. 

The acyclicity of $G$ implies a natural ordering of the vertices into disjoint sets, as follows.
We define $S^0(G)$ to denote the set of vertices with in-degree 0 in $G$ (so $S^0(G)=S'(G)$), $S^1(G)$ the set of vertices with in-degree 0 in the graph obtained from $G$ by deleting the vertices of $S^0(G)$ and all arcs with tails in $S^0(G)$, and so on, until the final set $S^{t}(G)$ contains the remaining vertices with in-degree 0 and out-degree 0 in the remaining graph. Note that these sets are well-defined since $G$ is acyclic, and also note that $S^{t}(G)\neq S''(G)$, in general.  
If a vertex $v\in V(G)$ is in the set $S^j(G)$ in the above ordering, we say that $v$ is {\em at $level\, j$\/} in $G$.

A graph $G$ is called {\em weakly connected\/} if all pairs of distinct vertices $u$ and $v$ of $G$ are connected through a 
sequence of distinct vertices $u=v_0v_1\ldots  v_k=v$ and arcs $a_1a_2\ldots a_k$ of $G$ with $\mu(a_i) = (v_{i-1}, v_i)$ or $(v_{i},v_{i-1})$ for $i=1,2,\ldots ,k$. 
We are mainly interested in weakly connected graphs, or in the weakly connected components of a graph $G$. 
If $X\subseteq V(G)$, then the {\em subgraph of $G$ induced by $X$\/}, denoted as $G[X]$, is the graph on the vertex set $X$ containing all the arcs of $G$ which have both their ends in $X$ (together with $L$, $\mu$ and $\lambda$ restricted to this subset of the arcs). If $X\subseteq V$ induces a weakly connected subgraph of $G$, but there is no set $Y\subseteq V$ such that $G[Y]$ is weakly connected and $X$ is a proper subset of $Y$, then $G[X]$ is called a {\em weakly connected component\/} of $G$. 
If $X\subseteq A(G)$, then the {\em subgraph of $G$ arc-induced by $X$\/}, denoted as $G\{X\}$, is the graph on arc set $X$ containing all the vertices of $G$ which are an end of an arc in $X$ (together with $L$, $\mu$ and $\lambda$ restricted to this subset of the arcs). 

A subset $A'$ of arcs $a\in A$ with $\lambda(a)=\lambda_1$ is called the largest subset of arcs with the same label pair $\lambda_1$ if there does not exist an arc $b\in A\backslash A'$ with $\lambda(b)=\lambda_2$ and $\lambda_1=\lambda_2$.

In the sequel, throughout we omit the words weakly connected, so a component should always be understood as a weakly connected component. In contrast to the notation in the textbook of Bondy and Murty \cite{GraphTheory},
we use $\omega(G)$ to denote the number of components of a graph $G$.

We denote the components of $G$ by $G_i$, where $i$ ranges from 1 to $\omega(G)$.
In that case, we use $V_i$, $A_i$ and $L_i$ as a shorthand notation for $V(G_i)$, $A(G_i)$ and $L(G_i)$, respectively.
The mappings $\mu$ and $\lambda$ have natural counterparts restricted to the subsets $A_i\subset A(G)$ that we do not specify explicitly. 
We use $G=\sum\limits_{i=1}^{\omega(G)} G_i$ to indicate that $G$ is the disjoint union of its components, implicitly defining its components as $G_1$ up to $G_{\omega(G)}$. In particular, $G=G_1$ if and only if $G$ is weakly connected itself.
Furthermore, we use $\overundercup{i=1}{\omega(G)} G_i$ to denote the graph with vertex set $\overundercup{i=1}{\omega(G)} V_i$, arc set $\overundercup{i=1}{\omega(G)} A_i$ with the mappings $\mu_i(a_i)=(u_i,v_i)$ and $\lambda(a_i)=(\ell(a_i),t(a_i))$ for each arc $a_i\in A_i$.

A graph $G$ according to the above definition is called \emph{bipartite} if there exists a partition of nonempty sets $V_1$ and $V_2$ of $V(G)$ into two partite sets (i.e., $V(G) = V_1 \cup V_2$, $V_1 \cap V_2 = \emptyset$) such that every arc of $G$ has its head vertex and tail vertex in different partite sets. Such a graph is called a \emph{bipartite graph}, and we denote such a bipartite graph $G$ by $B(V_1, V_2)$. 
A bipartite graph $B(V_1, V_2)$ is called complete if, for every pair $x \in V_1$, $y \in V_2$, there is an arc $a$ with $\mu(a)=(x,y)$ or $\mu(a)=(y,x)$ in $B(V_1, V_2)$.
We call $B(V_1, V_2)$ a \emph{trivial bipartite graph} if $|V_1|=|V_2|=1$ and $|A(B(V_1,V_2))|\geq 1$.
Finally, we call a bipartite graph $B(V_1, V_2)$ a \emph{clean bipartite graph} if all subgraphs $B(V'_1,V'_2)$ of $B(V_1, V_2)$ are complete, where each subgraph $B(V'_1,V'_2)$ is arc-induced by all arcs in $[V_1,V_2]$ with the same label pair,and, $[V_1,V_2]$ has no backward arcs or $[V_1,V_2]$ has no forward arcs.

\section{Graph products}\label{Terminology_products}
In this section, we define the three graph products we are using for our decomposition theorems. 
Instead of defining products for general pairs of graphs, 
for notational reasons we find it convenient to define those products for two components $G_i$ and $G_j$ of a disconnected graph $G$. 
But, before we define the Cartesian product $G_i\,\Box\, G_j$, the intermediate product $G_i\boxtimes G_j$ and the VRSP $G_i\boxbackslash G_j$ of $G_i$ and $G_j$, we have to define the notion of an (a)synchronous arc.
Therefore, an arc $a\in A_i$ with label pair $\lambda(a)$ is a \emph{synchronising arc} with respect to $G_j$, if and only if there exists an arc $b\in A_j$ with label pair $\lambda(b)$ such that $\lambda(a)=\lambda(b)$.
Furthermore, an arc $a$ with label pair $\lambda(a)$ of $G_i\boxtimes G_j$ or $G_i\boxbackslash G_j$ (the graph products $\boxtimes$ and $\boxbackslash$ are defined in the sequel)  is a \emph{synchronous} arc, whenever there exist a pair of arcs $a_i\in A_i$ and $a_j\in A_j$ with $\lambda(a)=\lambda(a_i)=\lambda(a_j)$.
Analogously, an arc $a$ with label pair $\lambda(a)$ of $G_i\boxtimes G_j$ or $G_i\boxbackslash G_j$ is an \emph{asynchronous} arc, whenever $\lambda(a)\notin L_i$ or $\lambda(a)\notin L_j$.

But first, in Figure~\ref{Example0}, we give an example of the three products. 
At the top and the left of Figure~\ref{Example0}, we have the two graphs $G_i$ and $G_j$.
Then, in the middle, we have the Cartesian product of $G_i$ and $G_j$, $G_i\,\Box\, G_j$. 
On the right, we have the intermediate product of $G_i$ and $G_j$, $G_i\,\boxtimes\, G_j$. Here we see that the asynchronous arcs with label pairs not equal to $s$ of $G_i\,\Box\, G_j$ are maintained, whereas the synchronous arcs with label pair $s$ are replaced by one arc with label pair $s$.
At the bottom, we have the VRSP of $G_i$ and $G_j$, $G_i\,\boxbackslash\, G_j$ and here we see that the vertices with $level\,\,  0$ in $G_i\boxtimes G_j$ and $level > 0$ in $G_i\,\Box\, G_j$ are removed.
Note, in the iterations, first, the vertices $(u_3,v_1)$ and $(u_1,v_3)$, and their arcs, are removed from $G_i\boxtimes G_j$.
In a second iteration, this is followed by the removal of the vertices $(u_4,v_1), (u_3,v_2), (u_2,v_3)$ and $(u_1,v_4)$, and their arcs, because these vertices have $level\,\, 0$ due to the removal of $(u_3,v_1)$ and $(u_1,v_3)$.
In the third and last iteration, the vertices $(u_4,v_2)$ and $(u_2,v_4)$ are removed, leading to the graph $G_i\boxbackslash G_j$.
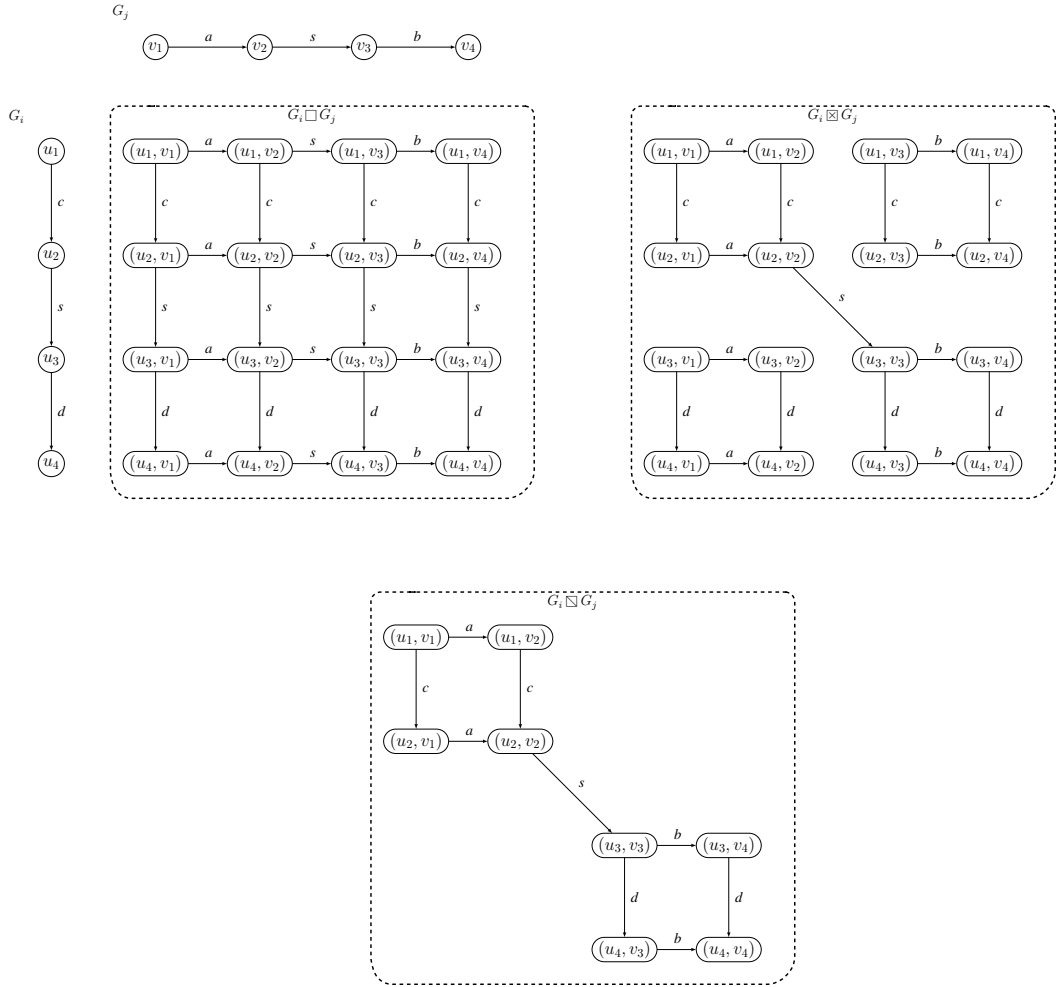
\begin{figure}[H]
\begin{center}
\resizebox{0.85\textwidth}{!}{
\begin{tikzpicture}[->,>=latex,shorten >=0pt,auto,node distance=2.5cm,
  main node/.style={circle,fill=blue!10,draw, font=\sffamily\Large\bfseries}]
  \tikzset{VertexStyle/.append style={
  font=\itshape\large, shape = circle,inner sep = 2pt, outer sep = 0pt,minimum size = 20 pt,draw}}
  \tikzset{EdgeStyle/.append style={thin}}
  \tikzset{LabelStyle/.append style={font = \itshape}}
  \SetVertexMath

  \def\x{0.0}
  \def\y{0.0}
  \node at (\x-1,\y-2) {$G_i$};
  \node at (\x+2,\y+1) {$G_j$};

  \Vertex[x=\x-0, y=\y-3.0]{u_1}
  \Vertex[x=\x-0, y=\y-6.0]{u_2}
  \Vertex[x=\x-0, y=\y-9.0]{u_3}
  \Vertex[x=\x-0, y=\y-12.0]{u_4}

\Edge[labelstyle={xshift=1pt, yshift=0pt},label = c](u_1)(u_2)
\Edge[labelstyle={xshift=1pt, yshift=0pt},label = s](u_2)(u_3)
\Edge[labelstyle={xshift=1pt, yshift=0pt},label = d](u_3)(u_4)

  \Vertex[x=\x+3, y=\y]{v_1}
  \Vertex[x=\x+6, y=\y]{v_2}
  \Vertex[x=\x+9, y=\y]{v_3}
  \Vertex[x=\x+12, y=\y]{v_4}

\Edge[labelstyle={xshift=1pt, yshift=1pt},label = a](v_1)(v_2)
\Edge[labelstyle={xshift=1pt, yshift=1pt},label = s](v_2)(v_3)
\Edge[labelstyle={xshift=1pt, yshift=1pt},label = b](v_3)(v_4)

\tikzset{VertexStyle/.append style={
  font=\itshape\large, shape = rounded rectangle, inner sep = 2pt, outer sep = 0pt,minimum size = 20 pt,draw}}

  \def\x{3.0}
  \def\y{0.0}
  \node at (\x+4.5,\y-2) {$G_i\,\Box\,G_j$};

  \Vertex[x=\x-0, y=\y-3.0, L={(u_1,v_1)}]{u_1v_1}
  \Vertex[x=\x-0, y=\y-6.0, L={(u_2,v_1)}]{u_2v_1}
  \Vertex[x=\x-0, y=\y-9.0, L={(u_3,v_1)}]{u_3v_1}
  \Vertex[x=\x-0, y=\y-12.0, L={(u_4,v_1)}]{u_4v_1}
  \Vertex[x=\x+3, y=\y-3.0, L={(u_1,v_2)}]{u_1v_2}
  \Vertex[x=\x+3, y=\y-6.0, L={(u_2,v_2)}]{u_2v_2}
  \Vertex[x=\x+3, y=\y-9.0, L={(u_3,v_2)}]{u_3v_2}
  \Vertex[x=\x+3, y=\y-12.0, L={(u_4,v_2)}]{u_4v_2}
  \Vertex[x=\x+6, y=\y-3.0, L={(u_1,v_3)}]{u_1v_3}
  \Vertex[x=\x+6, y=\y-6.0, L={(u_2,v_3)}]{u_2v_3}
  \Vertex[x=\x+6, y=\y-9.0, L={(u_3,v_3)}]{u_3v_3}
  \Vertex[x=\x+6, y=\y-12.0, L={(u_4,v_3)}]{u_4v_3}
  \Vertex[x=\x+9, y=\y-3.0, L={(u_1,v_4)}]{u_1v_4}
  \Vertex[x=\x+9, y=\y-6.0, L={(u_2,v_4)}]{u_2v_4}
  \Vertex[x=\x+9, y=\y-9.0, L={(u_3,v_4)}]{u_3v_4}
  \Vertex[x=\x+9, y=\y-12.0, L={(u_4,v_4)}]{u_4v_4}

\Edge[labelstyle={xshift=1pt, yshift=1pt},label = a](u_1v_1)(u_1v_2)
\Edge[labelstyle={xshift=1pt, yshift=1pt},label = a](u_2v_1)(u_2v_2)
\Edge[labelstyle={xshift=1pt, yshift=1pt},label = a](u_3v_1)(u_3v_2)
\Edge[labelstyle={xshift=1pt, yshift=1pt},label = a](u_4v_1)(u_4v_2)
\Edge[labelstyle={xshift=1pt, yshift=1pt},label = s](u_1v_2)(u_1v_3)
\Edge[labelstyle={xshift=1pt, yshift=1pt},label = s](u_2v_2)(u_2v_3)
\Edge[labelstyle={xshift=1pt, yshift=1pt},label = s](u_3v_2)(u_3v_3)
\Edge[labelstyle={xshift=1pt, yshift=1pt},label = s](u_4v_2)(u_4v_3)
\Edge[labelstyle={xshift=1pt, yshift=1pt},label = b](u_1v_3)(u_1v_4)
\Edge[labelstyle={xshift=1pt, yshift=1pt},label = b](u_2v_3)(u_2v_4)
\Edge[labelstyle={xshift=1pt, yshift=1pt},label = b](u_3v_3)(u_3v_4)
\Edge[labelstyle={xshift=1pt, yshift=1pt},label = b](u_4v_3)(u_4v_4)

\Edge[labelstyle={xshift=1pt, yshift=0pt},label = c](u_1v_1)(u_2v_1)
\Edge[labelstyle={xshift=1pt, yshift=0pt},label = s](u_2v_1)(u_3v_1)
\Edge[labelstyle={xshift=1pt, yshift=0pt},label = d](u_3v_1)(u_4v_1)
\Edge[labelstyle={xshift=1pt, yshift=0pt},label = c](u_1v_2)(u_2v_2)
\Edge[labelstyle={xshift=1pt, yshift=0pt},label = s](u_2v_2)(u_3v_2)
\Edge[labelstyle={xshift=1pt, yshift=0pt},label = d](u_3v_2)(u_4v_2)
\Edge[labelstyle={xshift=1pt, yshift=0pt},label = c](u_1v_3)(u_2v_3)
\Edge[labelstyle={xshift=1pt, yshift=0pt},label = s](u_2v_3)(u_3v_3)
\Edge[labelstyle={xshift=1pt, yshift=0pt},label = d](u_3v_3)(u_4v_3)
\Edge[labelstyle={xshift=1pt, yshift=0pt},label = c](u_1v_4)(u_2v_4)
\Edge[labelstyle={xshift=1pt, yshift=0pt},label = s](u_2v_4)(u_3v_4)
\Edge[labelstyle={xshift=1pt, yshift=0pt},label = d](u_3v_4)(u_4v_4)

\draw[circle, -,dashed, very thick,rounded corners=8pt] (\x-0.2,\y-1.7)--(\x+10.4,\y-1.7) --(\x+10.9,\y-1.7) -- (\x+10.9,\y-12.5)-- (\x+10.4,\y-13) -- (\x-1.0,\y-13) -- (\x-1.3,\y-12.5) -- (\x-1.3,\y-1.7) -- (\x-0.3,\y-1.7)--(\x-0.1,\y-1.7);

  \def\x{18.0}
  \def\y{0.0}
  \node at (\x+4.5,\y-2) {$G_i\boxtimes G_j$};
  \Vertex[x=\x-0, y=\y-3.0, L={(u_1,v_1)}]{u_1v_1}
  \Vertex[x=\x-0, y=\y-6.0, L={(u_2,v_1)}]{u_2v_1}
  \Vertex[x=\x-0, y=\y-9.0, L={(u_3,v_1)}]{u_3v_1}
  \Vertex[x=\x-0, y=\y-12.0, L={(u_4,v_1)}]{u_4v_1}
  \Vertex[x=\x+3, y=\y-3.0, L={(u_1,v_2)}]{u_1v_2}
  \Vertex[x=\x+3, y=\y-6.0, L={(u_2,v_2)}]{u_2v_2}
  \Vertex[x=\x+3, y=\y-9.0, L={(u_3,v_2)}]{u_3v_2}
  \Vertex[x=\x+3, y=\y-12.0, L={(u_4,v_2)}]{u_4v_2}
  \Vertex[x=\x+6, y=\y-3.0, L={(u_1,v_3)}]{u_1v_3}
  \Vertex[x=\x+6, y=\y-6.0, L={(u_2,v_3)}]{u_2v_3}
  \Vertex[x=\x+6, y=\y-9.0, L={(u_3,v_3)}]{u_3v_3}
  \Vertex[x=\x+6, y=\y-12.0, L={(u_4,v_3)}]{u_4v_3}
  \Vertex[x=\x+9, y=\y-3.0, L={(u_1,v_4)}]{u_1v_4}
  \Vertex[x=\x+9, y=\y-6.0, L={(u_2,v_4)}]{u_2v_4}
  \Vertex[x=\x+9, y=\y-9.0, L={(u_3,v_4)}]{u_3v_4}
  \Vertex[x=\x+9, y=\y-12.0, L={(u_4,v_4)}]{u_4v_4}

\Edge[labelstyle={xshift=1pt, yshift=1pt},label = a](u_1v_1)(u_1v_2)
\Edge[labelstyle={xshift=1pt, yshift=1pt},label = a](u_2v_1)(u_2v_2)
\Edge[labelstyle={xshift=1pt, yshift=1pt},label = a](u_3v_1)(u_3v_2)
\Edge[labelstyle={xshift=1pt, yshift=1pt},label = a](u_4v_1)(u_4v_2)
\Edge[labelstyle={xshift=1pt, yshift=1pt},label = s](u_2v_2)(u_3v_3)
\Edge[labelstyle={xshift=1pt, yshift=1pt},label = b](u_1v_3)(u_1v_4)
\Edge[labelstyle={xshift=1pt, yshift=1pt},label = b](u_2v_3)(u_2v_4)
\Edge[labelstyle={xshift=1pt, yshift=1pt},label = b](u_3v_3)(u_3v_4)
\Edge[labelstyle={xshift=1pt, yshift=1pt},label = b](u_4v_3)(u_4v_4)

\Edge[labelstyle={xshift=1pt, yshift=0pt},label = c](u_1v_1)(u_2v_1)
\Edge[labelstyle={xshift=1pt, yshift=0pt},label = d](u_3v_1)(u_4v_1)
\Edge[labelstyle={xshift=1pt, yshift=0pt},label = c](u_1v_2)(u_2v_2)
\Edge[labelstyle={xshift=1pt, yshift=0pt},label = d](u_3v_2)(u_4v_2)
\Edge[labelstyle={xshift=1pt, yshift=0pt},label = c](u_1v_3)(u_2v_3)
\Edge[labelstyle={xshift=1pt, yshift=0pt},label = d](u_3v_3)(u_4v_3)
\Edge[labelstyle={xshift=1pt, yshift=0pt},label = c](u_1v_4)(u_2v_4)
\Edge[labelstyle={xshift=1pt, yshift=0pt},label = d](u_3v_4)(u_4v_4)

\draw[circle, -,dashed, very thick,rounded corners=8pt] (\x-0.2,\y-1.7)--(\x+10.4,\y-1.7) --(\x+10.9,\y-1.7) -- (\x+10.9,\y-12.5)-- (\x+10.4,\y-13) -- (\x-1.0,\y-13) -- (\x-1.3,\y-12.5) -- (\x-1.3,\y-1.7) -- (\x-0.3,\y-1.7)--(\x-0.1,\y-1.7);
  \def\x{10.5}
  \def\y{-14.0}
  \node at (\x+4.5,\y-2) {$G_i\boxbackslash G_j$};
  \Vertex[x=\x-0, y=\y-3.0, L={(u_1,v_1)}]{u_1v_1}
  \Vertex[x=\x-0, y=\y-6.0, L={(u_2,v_1)}]{u_2v_1}
  \Vertex[x=\x+3, y=\y-3.0, L={(u_1,v_2)}]{u_1v_2}
  \Vertex[x=\x+3, y=\y-6.0, L={(u_2,v_2)}]{u_2v_2}
  \Vertex[x=\x+6, y=\y-9.0, L={(u_3,v_3)}]{u_3v_3}
  \Vertex[x=\x+6, y=\y-12.0, L={(u_4,v_3)}]{u_4v_3}
  \Vertex[x=\x+9, y=\y-9.0, L={(u_3,v_4)}]{u_3v_4}
  \Vertex[x=\x+9, y=\y-12.0, L={(u_4,v_4)}]{u_4v_4}

\Edge[labelstyle={xshift=1pt, yshift=1pt},label = a](u_1v_1)(u_1v_2)
\Edge[labelstyle={xshift=1pt, yshift=1pt},label = a](u_2v_1)(u_2v_2)
\Edge[labelstyle={xshift=1pt, yshift=1pt},label = s](u_2v_2)(u_3v_3)
\Edge[labelstyle={xshift=1pt, yshift=1pt},label = b](u_3v_3)(u_3v_4)
\Edge[labelstyle={xshift=1pt, yshift=1pt},label = b](u_4v_3)(u_4v_4)

\Edge[labelstyle={xshift=1pt, yshift=0pt},label = c](u_1v_1)(u_2v_1)
\Edge[labelstyle={xshift=1pt, yshift=0pt},label = c](u_1v_2)(u_2v_2)
\Edge[labelstyle={xshift=1pt, yshift=0pt},label = d](u_3v_3)(u_4v_3)
\Edge[labelstyle={xshift=1pt, yshift=0pt},label = d](u_3v_4)(u_4v_4)

\draw[circle, -,dashed, very thick,rounded corners=8pt] (\x-0.2,\y-1.7)--(\x+10.4,\y-1.7) --(\x+10.9,\y-1.7) -- (\x+10.9,\y-12.5)-- (\x+10.4,\y-13) -- (\x-1.0,\y-13) -- (\x-1.3,\y-12.5) -- (\x-1.3,\y-1.7) -- (\x-0.3,\y-1.7)--(\x-0.1,\y-1.7);

\end{tikzpicture}
}
\end{center}
\caption{The three products for the graphs $G_i$ and $G_j$, the Cartesian product $G_i\,\Box\, G_j$, the intermediate product, $G_i\,\boxtimes\, G_j$ and the VRSP $G_i\,\boxbackslash\, G_j$.}
  \label{Example0}
\end{figure}
We start by introducing the next analogue of the Cartesian product.

The {\em Cartesian product\/} $G_i\,\Box\, G_j$ of $G_i$ and $G_j$ is defined as the graph on the vertex set $V_{i,j}=V_i\times V_j$, and arc set $A_{i,j}$ consisting of two types of labelled arcs. 
For each arc $a\in A_i$ with $\mu(a)=(v_i,w_i)$, an {\em arc of type $i$\/} is introduced between tail $(v_i,v_j)\in V_{i,j}$ and head $(w_i,w_j)\in V_{i,j}$ whenever $v_j=w_j$; such an arc receives the label pair $\lambda(a)$. 
This implicitly defines parts of the mappings $\mu$ and $\lambda$ for $G_i \,\Box\, G_j$.
Similarly, for each arc $a\in A_j$ with $\mu(a)=(v_j,w_j)$, an {\em arc of type $j$\/} is introduced between tail $(v_i,v_j)\in V_{i,j}$ and head $(w_i,w_j)\in V_{i,j}$ whenever $v_i=w_i$; such an arc receives the label pair $\lambda(a)$. 
This completes the definition of $A_{i,j}$ and the  mappings $\mu$ and $\lambda$ for $G_i \,\Box\, G_j$.
So, arcs of type $i$ and $j$ correspond to arcs of $G_i$ and $G_j$, respectively, and have the associated label pairs.
For $k\ge 3$, the Cartesian product $G_1\,\Box\, G_2\,\Box \cdots\Box\, G_k$ is defined recursively as $((G_1\,\Box\, G_2)\,\Box \cdots )\Box\, G_k$. This Cartesian product is commutative and associative, as can be verified easily and is a well-known fact for the undirected analogue. 
Since we are particularly interested in synchronising arcs, we modify the Cartesian product $G_i\,\Box\, G_j$ according to the existence of synchronising arcs, i.e., pairs of arcs with the same label pair, with one arc in $G_i$ and one arc in $G_j$. 

The first step in this modification consists of ignoring (in fact deleting) the synchronising arcs while forming arcs in the product, but additionally combining pairs of synchronising arcs of $G_i$ and $G_j$ into one arc, yielding the {\em intermediate product\/} which we denote by $G_i \boxtimes G_j$. 

To be more precise, $G_i \boxtimes G_j$ is obtained from $G_i\,\Box\, G_j$ by first ignoring all except for the so-called {\em asynchronous\/} arcs, i.e., by only maintaining all arcs $a\in A_{i,j}$ for which $\mu(a)=((v_i,v_j), (w_i,$ $w_j))$, whenever $v_j=w_j$ and $\lambda(a) \notin L_j$, as well as all arcs $a\in A_{i,j}$ for which $\mu(a)=((v_i,v_j),(w_i,$ $w_j))$, whenever $v_i=w_i$ and $\lambda(a)\notin L_i$. 
Additionally, we add arcs that replace synchronising pairs $a_i\in A_i$ and $a_j\in A_j$ with $\lambda(a_i)=\lambda(a_j)$. If $\mu(a_i)=(v_i,w_i)$ and $\mu(a_j)=(v_j,w_j)$, such a pair is replaced by an arc $a_{i,j}$ with  $\mu(a_{i,j})=((v_i,v_j),(w_i,w_j))$ and $\lambda(a_{i,j})=\lambda(a_i)$. 
We call such arcs  of $G_i \boxtimes G_j$ {\em synchronous\/} arcs. 

The second step in this modification consists of removing (from $G_i \boxtimes G_j$) the vertices $(v_i,v_j)\in V_{i,j}$ and the arcs $a$ with $tail(a)=(v_i,v_j)$, in the case that $(v_i,v_j)$ has $level > 0$ in $G_i\,\Box\, G_j$ but $level\, 0$ in $G_i\boxtimes G_j$. 
This is then repeated in the newly obtained graph, and so on, until there are no more vertices at $level\, 0$ in the current graph that are at $level > 0$ in $G_i\,\Box\, G_j$. This finds its motivation in the fact that in our applications, the states that are represented by such vertices can never be reached, so are irrelevant. 

The resulting graph is called the {\em vertex-removing synchronised product\/} (VRSP for short) of $G_i$ and $G_j$, and denoted as $G_i \boxbackslash G_j$. 
For $k\ge 3$, the {VRSP} $G_1 \boxbackslash G_2 \boxbackslash \cdots \boxbackslash G_k$ is defined recursively as $((G_1 \boxbackslash G_2) \boxbackslash \cdots ) \boxbackslash G_k$. The VRSP is commutative, but not associative in general, in contrast to the Cartesian product. These properties are not relevant for the decomposition results that follow. 
However, for these results, it is relevant to introduce counterparts of graph isomorphism and graph contraction that apply to our types of graphs. We define these counterparts in the next section.

\section{Graph isomorphism and graph contraction}\label{Terminology_morphisms}
The isomorphism we introduce in this section is an analogue of a known concept for unlabelled graphs, but involves statements on the labels.

We assume that two different arcs with the same head and tail have different label pairs; otherwise, we replace such multiple arcs by one arc with that label pair, because these arcs represent exactly the same action at the same stage of a process.

Formally, an {\em isomorphism\/} from a graph $G$ to a graph $H$ consists of two bijections $\phi : V(G)\rightarrow V(H)$ and $\rho : A(G)\rightarrow A(H)$ such that for all $a \in A(G)$, one has $\mu(a) = (u, v)$ if and only if $\mu(\rho(a))=(\phi(u),\phi(v))$ and $\lambda(a)=\lambda(\rho(a))$. 
Since we assume that two different arcs with the same head and tail have different label pairs, however, the bijection $\rho$ is superfluous. The reason is that, if $(\phi,\rho)$ is an isomorphism, then $\rho$ is completely determined by $\phi$ and the label pairs. 
In fact, if $(\phi,\rho)$ is an isomorphism and $\mu(a)=(u,v)$ for an arc $a\in A(G)$, then $\rho(a)$ is the unique arc $b\in A(H)$ with $\mu(b)=(\phi(u),\phi(v))$ and  label pair $\lambda(b)=\lambda(a)$. 
Thus, we may define an isomorphism from $G$ to $H$ as a bijection $\phi : V(G)\rightarrow V(H)$ such that there exists an arc $a \in A(G)$ with $\mu(a)=(u,v)$ if and only if there exists an arc $b\in A(H)$ with $\mu(b)=(\phi(u),\phi(v))$ and $\lambda(b)=\lambda(a)$.
An isomorphism from $G$ to $H$ is denoted as $G\cong H$.

Next, we define what we mean by contraction.  
Let $X$ be a nonempty proper subset of $V(G)$, and let $Y=V(G)\setminus X$. 
By {\em contracting $X$\/} we mean replacing $X$ by a new vertex $\tilde{x}$, deleting all arcs with both ends in $X$, replacing each arc $a\in A(G)$ with $\mu(a)=(u,v)$ for $u\in X$ and $v\in Y$ by an arc $c$ with $\mu(c)=(\tilde{x},v)$ and $\lambda (c)=\lambda(a)$, and replacing each arc $b\in A(G)$ with $\mu(b)=(u,v)$ for $u\in Y$ and $v\in X$ by an arc $d$ with $\mu(d)=(u,\tilde{x})$ and $\lambda (d)=\lambda(b)$. We denote the resulting graph as $G/X$, and say that $G/X$ is the {\em contraction of $G$ with respect to $X$\/}. 
If we contract more than one subset $X_i$ of $V$ we denote $((G/X_1)/X_2\ldots)/X_n$ by $G/X_1/X_2\ldots /X_n$.

\section{Graph theorems from~\cite{dam}}\label{twographtheorems}
Finally, we recall the two decomposition theorems that were stated and proved in~\cite{dam}. 
\begin{theorem}[\cite{dam}]\label{theorem_1}
Let $G$ be a graph, let $X$ be a nonempty proper subset of $V(G)$, and let $Y=V(G)\setminus X$. Suppose that all the arcs of $[X,Y]$ have distinct label pairs and that the arcs of $G/X$ and $G/Y$ corresponding to the arcs of $[X,Y]$ are the only synchronising arcs of $G/X$ and $G/Y$. 
If $S'(G)\subseteq X$ and $[X,Y]$ has no backward arcs, then 
 $G\cong G/Y\boxbackslash G/X$.
\end{theorem}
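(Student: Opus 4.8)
The plan is to exhibit an explicit isomorphism and verify it directly against the construction of the VRSP. Writing $G_i=G/Y$ (with vertex set $X\cup\{\tilde{y}\}$) and $G_j=G/X$ (with vertex set $\{\tilde{x}\}\cup Y$), I would define $\phi:V(G)\to V(G_i\boxbackslash G_j)$ by $\phi(u)=(u,\tilde{x})$ for $u\in X$ and $\phi(v)=(\tilde{y},v)$ for $v\in Y$. First I would record the consequences of the hypotheses that drive everything. Because $[X,Y]$ has no backward arcs, in $G/X$ the contracted vertex $\tilde{x}$ has in-degree $0$ and in $G/Y$ the contracted vertex $\tilde{y}$ has out-degree $0$; combined with $S'(G)\subseteq X$, every vertex of $Y$ has positive in-degree in $G/X$, so $\tilde{x}$ is in fact the only source of $G/X$, while the sources of $G/Y$ are exactly $S'(G)$. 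Consequently the sources of the Cartesian product $G_i\Box G_j$ are precisely the vertices $(u,\tilde{x})$ with $u\in S'(G)$. I would also note that $[X,Y]\neq\emptyset$ is forced, since otherwise a source of the acyclic graph $G[Y]$ would be a source of $G$ lying in $Y$.

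Next I would determine the arcs of the intermediate product $G_i\boxtimes G_j$. Using the hypothesis that the only synchronising arcs are those corresponding to $[X,Y]$, the label sets of $A[X]$, $A[Y]$ and $[X,Y]$ are pairwise disjoint. Hence the asynchronous arcs of type $i$ are exactly the images of the arcs of $A[X]$ (running between $(u,w)$ and $(u',w)$ for each second coordinate $w$), the asynchronous arcs of type $j$ are exactly the images of the arcs of $A[Y]$, and --- using that the arcs of $[X,Y]$ carry \emph{distinct} labels --- each forward arc $c$ with $\mu(c)=(u,v)$, $u\in X$, $v\in Y$, yields a single synchronous arc from $(u,\tilde{x})$ to $(\tilde{y},v)$, with no spurious synchronous arcs arising.

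The heart of the argument, and the step I expect to be the main obstacle, is to identify the vertices that survive the iterated vertex removal defining the VRSP. I would prove that a vertex survives if and only if it is reachable in $G_i\boxtimes G_j$ by a directed path from a source of $G_i\Box G_j$. The ``if'' direction follows by induction along such a path, since a vertex that always retains an in-arc from a surviving vertex is never at level $0$ prematurely; the ``only if'' direction follows by induction on the topological order of the acyclic graph $G_i\boxtimes G_j$, because every in-neighbour of an unreachable vertex is itself unreachable, so once they are all removed the vertex becomes a spurious level-$0$ vertex and is deleted. From the description of the arcs above, the vertices reachable from the sources $(u,\tilde{x})$, $u\in S'(G)$, are exactly $\{(u,\tilde{x}):u\in X\}\cup\{(\tilde{y},v):v\in Y\}$: the type-$i$ arcs reach every $(u,\tilde{x})$ because $G[X]$ is acyclic with sources in $S'(G)$, the synchronous and type-$j$ arcs then reach every $(\tilde{y},v)$ because $S'(G)\subseteq X$ forces every vertex of $Y$ to be reached through a forward arc, and no path can reach a mixed vertex $(u,v)$ with $u\in X$, $v\in Y$, or the vertex $(\tilde{y},\tilde{x})$, since leaving the second coordinate $\tilde{x}$ is only possible along a synchronous arc, which simultaneously sets the first coordinate to $\tilde{y}$. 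This shows that $\phi$ is a bijection onto $V(G_i\boxbackslash G_j)$.

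Finally I would check that $\phi$ and its inverse preserve arcs and labels. The surviving arcs of $G_i\boxtimes G_j$ are untouched by the removal, so it remains to match them with the arcs of $G$: arcs within $X$ correspond to type-$i$ arcs between $(u,\tilde{x})$ and $(u',\tilde{x})$, arcs within $Y$ to type-$j$ arcs between $(\tilde{y},v)$ and $(\tilde{y},v')$, and arcs of $[X,Y]$ to the synchronous arcs from $(u,\tilde{x})$ to $(\tilde{y},v)$, each with matching label; the vertex-survival analysis guarantees that no further arcs occur among the surviving vertices. Together with the bijectivity of $\phi$ this yields $G\cong G/Y\boxbackslash G/X$.
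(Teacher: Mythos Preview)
Your proposal is correct and follows essentially the same architecture as the paper's argument (which appears here as the proof of Theorem~\ref{theorem_3}, specialised to distinct labels): the same map $\phi$, the same target set $Z=\{(u,\tilde{x}):u\in X\}\cup\{(\tilde{y},v):v\in Y\}$, the same three-way split of arcs into $A[X]$, $A[Y]$ and $[X,Y]$, and the same conclusion that the subgraph of $G/Y\boxtimes G/X$ induced by $Z$ is isomorphic to $G$.

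The one genuine difference is in how you dispose of the extra vertices. The paper argues directly and iteratively: it shows that every $(u,v)\in X\times Y$ with $u$ at level $0$ in $G[X]$ and $v$ at level $0$ in $G[Y]$ has level $0$ in $G/Y\boxtimes G/X$ but level $>0$ in $G/Y\,\Box\, G/X$, removes these, and then repeats the argument level by level. You instead prove once and for all the clean characterisation ``a vertex survives the VRSP iff it is reachable in $G_i\boxtimes G_j$ from a source of $G_i\,\Box\, G_j$'', and then determine the reachable set by tracking how the three arc types move the coordinates. Your version is tidier and reusable (it applies to any VRSP computation, not just this one), and it avoids the slightly informal ``repeat the above arguments'' step in the paper; the paper's version is more hands-on and stays closer to the definition of the VRSP. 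Both routes arrive at exactly the same surviving set $Z$ and hence the same isomorphism.
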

\begin{theorem}[\cite{dam}]\label{theorem_2}
Let $G$ be a graph, and let $X_1$, $X_2$ and $Y=V(G)\setminus (X_1\cup X_2)$ be three disjoint nonempty subsets of $V(G)$. Suppose that all the arcs of $[X_1,Y]$ have distinct label pairs, all the arcs of $[Y,X_2]$ have distinct label pairs, all the arcs of $[X_1,X_2]$ have distinct label pairs, the arcs of $[X_1,X_2]$ have no label pairs in common with any arcs in $[X_1,Y]\cup[Y,X_2]$, and that the arcs of $G/X_1/X_2$ and $G/Y$ corresponding to the arcs of $[X_1,Y]\cup [Y,X_2]\cup [X_1,X_2]$ are the only synchronising arcs of $G/X_1/X_2$ and $G/Y$. 
If $S'(G)\subseteq X_1$, and $[X_1,Y]$, $[Y,X_2]$ and $[X_1,X_2]$ have no backward arcs, then 
 $G\cong G/Y\boxbackslash G/X_1/X_2$.
\end{theorem}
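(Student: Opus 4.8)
\noindent
The plan is to exhibit one explicit isomorphism and then to pin down exactly which vertices of the Cartesian product survive the vertex-removing step; the argument runs parallel to the proof of Theorem~\ref{theorem_1}, with the set $[X_1,X_2]$ and the double contraction as the new ingredients. Write $H_1=G/Y$ and $H_2=G/X_1/X_2$, so that $V(H_1)=X_1\cup X_2\cup\{\tilde{y}\}$ and $V(H_2)=Y\cup\{\tilde{x}_1,\tilde{x}_2\}$. I would first record, straight from the contraction rules, the arcs of $H_1$ and $H_2$: in $H_1$ the arcs inside $X_1$ and inside $X_2$ are retained, the forward arcs of $[X_1,Y]$ become arcs into $\tilde{y}$, those of $[Y,X_2]$ become arcs out of $\tilde{y}$, and those of $[X_1,X_2]$ stay as arcs $X_1\to X_2$; dually, in $H_2$ the arcs inside $Y$ are retained and the three cross-classes are folded onto $\tilde{x}_1$ and $\tilde{x}_2$. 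By the hypothesis on labels, the arcs carrying a synchronising label are precisely the images of $[X_1,Y]\cup[Y,X_2]\cup[X_1,X_2]$, while all arcs inside $X_1$, inside $X_2$ and inside $Y$ are asynchronous.

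\noindent
Next I would define the candidate map $\phi:V(G)\to V(H_1\Box H_2)$ by $\phi(v)=(v,\tilde{x}_1)$ for $v\in X_1$, $\phi(v)=(v,\tilde{x}_2)$ for $v\in X_2$, and $\phi(v)=(\tilde{y},v)$ for $v\in Y$, and call a product vertex \emph{consistent} if it has one of these three forms. A check against the arc lists above shows that every arc of $G$ has a canonical partner in $H_1\boxtimes H_2$ between the images of its ends: an arc inside $X_1$, $X_2$ or $Y$ yields an asynchronous arc (fixing the contracted coordinate), while an arc of $[X_1,Y]$, $[Y,X_2]$ or $[X_1,X_2]$ yields the synchronous arc obtained by pairing its two images in $H_1$ and $H_2$. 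Conversely, every asynchronous arc of $H_1\boxtimes H_2$ joining two consistent vertices, and every synchronous arc joining two consistent vertices, arises in exactly this way, so $\phi$ induces a label-preserving bijection between $A(G)$ and the arcs of $H_1\boxtimes H_2$ with both ends consistent.

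\noindent
The heart of the argument is to show that the surviving vertices of the VRSP are exactly the consistent ones, which I would prove by induction on the level of a vertex in $H_1\Box H_2$. For the base case, the condition $S'(G)\subseteq X_1$ forces $[X_1,Y]\neq\emptyset$ and $[X_1,X_2]\cup[Y,X_2]\neq\emptyset$ (a within-$Y$ source and a within-$X_2$ source of $G$ must each receive an arc from $X_1$, since backward arcs are excluded), which gives $S'(H_1)=S'(G)\subseteq X_1$ and $S'(H_2)=\{\tilde{x}_1\}$; hence the level-$0$ vertices of $H_1\Box H_2$ are precisely $S'(G)\times\{\tilde{x}_1\}$, all consistent and all genuine sources of $H_1\boxtimes H_2$, so none is removed. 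For the inductive step I would prove the key lemma that in $H_1\boxtimes H_2$ no arc runs from a consistent vertex to an inconsistent one: an asynchronous arc keeps the non-contracted coordinate inside a single block, and a synchronous arc between consistent vertices lands on an image $\phi(v)$, so a consistent tail forces a consistent head. Consequently an inconsistent vertex receives arcs only from inconsistent vertices, and, being at level $>0$, is not a genuine source; once its lower-level in-neighbours are deleted it becomes a source and is removed. Dually, a consistent vertex $\phi(v)$ at level $>0$ corresponds to a non-source $v$ of $G$, so it inherits a surviving in-arc from the image of an in-neighbour of $v$, which survives by induction, and $\phi(v)$ is never deleted.

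\noindent
Putting these together, the surviving vertex set is exactly $\phi(V(G))$ and the surviving arcs are exactly the partners of the arcs of $G$, whence $\phi$ is the required isomorphism $G\cong G/Y\boxbackslash G/X_1/X_2$. The main obstacle I anticipate is the bookkeeping in the key lemma and the induction, especially the possibility---not excluded by the hypotheses---that a label is shared between $[X_1,Y]$ and $[Y,X_2]$. In that case the synchronisation also creates spurious synchronous arcs, but a direct check shows these run only between inconsistent vertices (for instance from $(\tilde{y},\tilde{x}_1)$ to a vertex $(z,w)$ with $z\in X_2$ and $w\in Y$), so they neither feed a consistent vertex nor disturb the removal cascade; making this airtight, and aligning the removal order with the level order of $H_1\Box H_2$, is where the real work lies.
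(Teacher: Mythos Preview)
Your proposal is correct and follows essentially the same route as the paper: the paper does not reprove Theorem~\ref{theorem_2} here (it is quoted from~\cite{dam}), but its proof of the generalisation Theorem~\ref{theorem_4} uses the identical map $\phi$, the same claim that the subgraph of $G/Y\boxtimes G/X_1/X_2$ induced by $Z=\phi(V(G))$ is isomorphic to $G$, and then a removal argument for the remaining vertices. Your organisation of the removal step via a single key lemma (no arc in $\boxtimes$ from a consistent vertex to an inconsistent one) together with induction on the $\Box$-level is a tidy repackaging of the paper's case-by-case elimination of $X_1\times Y$, $X_2\times Y$, $X_2\times\{\tilde{x}_1\}$, $X_1\times\{\tilde{x}_2\}$ and $\{(\tilde{y},\tilde{x}_1),(\tilde{y},\tilde{x}_2)\}$, and your explicit treatment of a label shared between $[X_1,Y]$ and $[Y,X_2]$ matches what the paper handles inside the proof of Claim~\ref{claim3}; the substance of the two arguments is the same.
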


\section{New results}~\label{sec:decomprev}
We start with relaxing the requirement in Theorem~\ref{theorem_1} that states that all arcs of $[X,Y]$ have distinct label pairs in the following manner: each largest set of arcs of $[X,Y]$ with the same label pair arc-induces a complete bipartite subgraph of $G$.
Hence, $G\{[X,Y]\}$ is a clean bipartite subgraph of $G$.
Furthermore, we relax the requirement in Theorem~\ref{theorem_2} that all arcs of $[X_1,Y]$, $[Y,X_2]$ and $[X_1,X_2]$ have distinct label pairs in the following manner: firstly, each largest set of arcs of $[X_1,Y]$ with the same label pair arc-induces a complete bipartite subgraph of $G$, secondly, each largest set of arcs of $[Y,X_2]$ with the same label pair arc-induces a complete bipartite subgraph of $G$ and, thirdly, the label pairs of the arcs in $[X_1,X_2]$ do not have to be distinct.
Hence, $G\{[X_1,Y]\}$ is a clean bipartite subgraph of $G$ and $G\{[Y,X_2]\}$ is a clean bipartite subgraph of $G$.

The relaxed requirement of Theorem~\ref{theorem_1} and the first and second relaxed requirement of Theorem~\ref{theorem_2} are based on the decomposition of a complete bipartite graph where all arcs have the same label pair. 
The~third relaxed requirement of Theorem~\ref{theorem_2} is based on the observation that the contraction of $X_1$ and $X_2$, $G/X_1/X_2$, replaces the set of arcs $[X_1,X_2]$ by a set of arcs $[\{\tilde{x}_1\},\{\tilde{x}_2\}]$.
Hence, let $G'$ be the subgraph of $G/Y$ arc-induced by the set of arcs $[X_1,X_2]$ of $G/Y$ and let $G''$ be the subgraph of $G/X_1/X_2$ arc-induced by the set of arcs $[\{\tilde{x}_1\},\{\tilde{x}_2\}]$ of $G/X_1/X_2$.
Then the VRSP of $G'$ and $G''$ is isomorphic to $G'
$, i.e. $G'\cong G'\boxbackslash G''$.

We have depicted a simple example in Figure~\ref{Example1} which illustrates these three relaxed requirements.
At the upper left of Figure~\ref{Example1}, we show the graph $G$. 
The subgraph arc-induced by the arcs with label pair $c$ contains two complete bipartite subgraphs.
The arcs with label pair $c$ are the only arcs in $[X_1,Y]\cup [Y,X_2]$.
For all other sets of arcs in $G$ with the same label pair we do not require that these sets arc-induce a complete bipartite graph as they are not in $[X_1,Y]\cup [Y,X_2]$. 
At the lower left and the upper right of Figure~\ref{Example1}, we show the contracted graphs $G/Y$ and $G/X_1/X_2$, respectively.
At the lower right of Figure~\ref{Example1}, we show the intermediate product of the graphs $G/Y$ and $G/X_1/X_2$, $G/Y\boxtimes G/X_1/X_2$.
The vertices in the set $Z$ at the lower right of Figure~\ref{Example1} induce the graph $G/Y\boxbackslash G/X_1/X_2$ which is isomorphic to $G$. 

\begin{figure}[H]
\begin{center}
\resizebox{1.0\textwidth}{!}{
\begin{tikzpicture}[->,>=latex,shorten >=0pt,auto,node distance=2.5cm,
  main node/.style={circle,fill=blue!10,draw, font=\sffamily\Large\bfseries}]
  \tikzset{VertexStyle/.append style={
  font=\itshape\large, shape = circle,inner sep = 2pt, outer sep = 0pt,minimum size = 20 pt,draw}}
  \tikzset{EdgeStyle/.append style={thin}}
  \tikzset{LabelStyle/.append style={font = \itshape}}
  \SetVertexMath

  \clip (-18.75,-1) rectangle (13,-38);
  \def\x{0.0}
  \def\y{0.0}
\node at (\x-17.0,\y-2) {$G$};
\node at (\x-18.0,\y-3) {$X_1$};
\node at (\x-8,\y-3) {$X_2$};
\node at (\x-13,\y-3) {$Y$};
\node at (\x+1.0-2,\y-2) {$G/X_1/X_2$};
\node at (\x-14.0,\y-14) {$G/Y$};
\node at (\x+2.0,\y-9.5) {$G/Y\boxtimes G/X_1/X_2$};
\node at (\x-4,\y-13.6) {$Z$};
  \def\x{-18.0}
  \def\y{-6.0}
  \Vertex[x=\x-0, y=\y+1.0]{u_1}
  \Vertex[x=\x+2, y=\y+2.0]{u_2}
  \Vertex[x=\x+2, y=\y+0.0,L={u_{3}}]{u_3}
  \Vertex[x=\x+4, y=\y+2.0,L={u_{4}}]{u_4}
  \Vertex[x=\x+4, y=\y+0.0,L={u_{5}}]{u_5}
  \Vertex[x=\x+6, y=\y+2.0,L={u_{6}}]{u_6}
  \Vertex[x=\x+6, y=\y+0.0,L={u_{7}}]{u_7}
  \Vertex[x=\x+8, y=\y+2.0,L={u_{8}}]{u_8}
  \Vertex[x=\x+8, y=\y+0.0,L={u_{9}}]{u_9}
  \Vertex[x=\x+10, y=\y+1.0,L={u_{10}}]{u_10}

\Edge[style={in=180, out=60},labelstyle={xshift=14pt, yshift=4pt},label = a](u_1)(u_2)
\Edge[style={in=180, out=-60},label = a](u_1)(u_3)
\Edge[style={in=135, out=45},labelstyle={xshift=0pt, yshift=1pt},label = b](u_2)(u_8)
\Edge[style={in=225, out=-45},labelstyle={xshift=0pt, yshift=1pt},label = b](u_3)(u_9)
\Edge[labelstyle={xshift=1pt, yshift=0pt},label = c](u_2)(u_4)
\Edge[labelstyle={xshift=-16pt, yshift=12pt},label = c](u_2)(u_5)
\Edge(u_2)(u_5)
\Edge[labelstyle={xshift=-8pt, yshift=-9pt},label = c](u_3)(u_4)
\Edge[labelstyle={xshift=1pt, yshift=0pt},label = c](u_3)(u_5)

\Edge[labelstyle={xshift=1pt, yshift=0pt},label = d](u_4)(u_6)
\Edge[labelstyle={xshift=1pt, yshift=0pt},label = d](u_5)(u_7)
\Edge(u_4)(u_6)
\Edge(u_5)(u_7)

\Edge[labelstyle={xshift=1pt, yshift=0pt},label = c](u_6)(u_8)
\Edge[labelstyle={xshift=-16pt, yshift=12pt},label = c](u_6)(u_9)
\Edge(u_6)(u_9)
\Edge[labelstyle={xshift=-8pt, yshift=-9pt},label = c](u_7)(u_8)
\Edge[labelstyle={xshift=1pt, yshift=0pt},label = c](u_7)(u_9)
\Edge(u_6)(u_8)
\Edge(u_6)(u_9)
\Edge(u_7)(u_8)
\Edge(u_7)(u_9)

\Edge[style={in=120, out=0},labelstyle={xshift=0pt, yshift=4pt},label = f](u_8)(u_10)
\Edge[style={in=240, out=0},label = f](u_9)(u_10)

  \def\x{-18.0}
  \def\y{-15.0}
  \Vertex[x=\x+6, y=\y+0.0]{u_1}
  \Vertex[x=\x+6, y=\y-3.0]{u_2}
  \Vertex[x=\x+6, y=\y-6.0,L={u_{3}}]{u_3}
  \Vertex[x=\x+6, y=\y-9.0,L={\tilde{y}}]{u_4}
  \Vertex[x=\x+6, y=\y-12.0,L={u_{8}}]{u_8}
  \Vertex[x=\x+6, y=\y-15.0,L={u_{9}}]{u_9}
  \Vertex[x=\x+6, y=\y-18.0,L={u_{10}}]{u_10}

\Edge[style={in=135, out=225},labelstyle={xshift=14pt, yshift=4pt},label = a](u_1)(u_2)
\Edge[style={in=225-90, out=-225+90},label = a](u_1)(u_3)
\Edge[style={in=135, out=225},labelstyle={xshift=0pt, yshift=1pt},label = b](u_2)(u_8)
\Edge[style={in=225-90, out=-225+90},labelstyle={xshift=0pt, yshift=1pt},label = b](u_3)(u_9)
\Edge[style={in=225-90, out=-225+90},labelstyle={xshift=1pt, yshift=0pt},label = c](u_2)(u_4)
\Edge[labelstyle={xshift=1pt, yshift=0pt},label = c](u_3)(u_4)
\Edge(u_3)(u_4)
\Edge[labelstyle={xshift=1pt, yshift=0pt},label = c](u_4)(u_8)
\Edge(u_4)(u_8)
\Edge[style={in=135, out=225},labelstyle={xshift=-16pt, yshift=12pt},label = c](u_4)(u_9)
\Edge[style={in=135, out=225},labelstyle={xshift=0pt, yshift=4pt},label = f](u_8)(u_10)
\Edge[style={in=225-90, out=-225+90},label = f](u_9)(u_10)

  \def\x{-4.0}
  \def\y{-6.0}
  \Vertex[x=\x+0, y=\y+1,L={\tilde{x}_1}]{u_1}
  \Vertex[x=\x+15, y=\y+1,L={\tilde{x}_2}]{u_10}
  \Vertex[x=\x+3, y=\y+1,L={u_{4}}]{u_4}
  \Vertex[x=\x+6, y=\y+1,L={u_{5}}]{u_5}
  \Vertex[x=\x+9, y=\y+1,L={u_{6}}]{u_6}
  \Vertex[x=\x+12, y=\y+1,L={u_{7}}]{u_7}

\Edge[style={in=45+180, out=315},labelstyle={xshift=0pt, yshift=1pt},label = b](u_1)(u_10)
\Edge[style={in=225, out=315}](u_1)(u_10)
\Edge[labelstyle={xshift=1pt, yshift=0pt},label = c](u_1)(u_4)
\Edge[style={in=135, out=45},labelstyle={xshift=1pt, yshift=0pt},label = c](u_1)(u_5)

\Edge[style={in=135, out=45},labelstyle={xshift=1pt, yshift=0pt},label = d](u_4)(u_6)
\Edge[style={in=225, out=315},labelstyle={xshift=1pt, yshift=0pt},label = d](u_5)(u_7)

\Edge[style={in=135, out=225-90},labelstyle={xshift=0pt, yshift=4pt},label = c](u_6)(u_10)
\Edge[label = c](u_7)(u_10)

  \tikzset{VertexStyle/.append style={
  font=\itshape\large, shape = rounded rectangle, inner sep = 2pt, outer sep = 0pt,minimum size = 20 pt,draw}}
  
  \def\x{-4.0}
  \def\y{-15.0}
  \Vertex[x=\x, y=\y+0.0,L={(u_1,\tilde{x}_1)}]{u_11}
  \Vertex[x=\x, y=\y-3.0,L={(u_2,\tilde{x}_1)}]{u_21}
  \Vertex[x=\x, y=\y-6.0,L={(u_3,\tilde{x}_1)}]{u_31}
  \Vertex[x=\x, y=\y-9.0,L={(\tilde{y},\tilde{x}_1)}]{u_41}
  \Vertex[x=\x, y=\y-12.0,L={(u_8,\tilde{x}_1)}]{u_51}
\Vertex[x=\x, y=\y-15.0,L={(u_9,\tilde{x}_1)}]{u_61}
\Vertex[x=\x, y=\y-18.0,L={(u_{10},\tilde{x}_1)}]{u_71}

  \def\x{-1.0}
  \Vertex[x=\x, y=\y+0.0,L={(u_1,u_4)}]{u_12}
  \Vertex[x=\x, y=\y-3.0,L={(u_2,u_4)}]{u_22}
  \Vertex[x=\x, y=\y-6.0,L={(u_3,u_4)}]{u_32}
  \Vertex[x=\x, y=\y-9.0,L={(\tilde{y},u_4)}]{u_42}
  \Vertex[x=\x, y=\y-12.0,L={(u_8,u_4)}]{u_52}
\Vertex[x=\x, y=\y-15.0,L={(u_9,u_4)}]{u_62}
\Vertex[x=\x, y=\y-18.0,L={(u_{10},u_4)}]{u_72}

  \def\x{2.0}
  \Vertex[x=\x, y=\y+0.0,L={(u_1,u_5)}]{u_13}
  \Vertex[x=\x, y=\y-3.0,L={(u_2,u_5)}]{u_23}
  \Vertex[x=\x, y=\y-6.0,L={(u_3,u_5)}]{u_33}
  \Vertex[x=\x, y=\y-9.0,L={(\tilde{y},u_5)}]{u_43}
  \Vertex[x=\x, y=\y-12.0,L={(u_8,u_5)}]{u_53}
\Vertex[x=\x, y=\y-15.0,L={(u_9,u_5)}]{u_63}
\Vertex[x=\x, y=\y-18.0,L={(u_{10},u_5)}]{u_73}

  \def\x{5.0}
  \Vertex[x=\x, y=\y+0.0,L={(u_1,u_6)}]{u_14}
  \Vertex[x=\x, y=\y-3.0,L={(u_2,u_6)}]{u_24}
  \Vertex[x=\x, y=\y-6.0,L={(u_3,u_6)}]{u_34}
  \Vertex[x=\x, y=\y-9.0,L={(\tilde{y},u_6)}]{u_44}
  \Vertex[x=\x, y=\y-12.0,L={(u_8,u_6)}]{u_54}
\Vertex[x=\x, y=\y-15.0,L={(u_9,u_6)}]{u_64}
\Vertex[x=\x, y=\y-18.0,L={(u_{10},u_6)}]{u_74}

  \def\x{8.0}
  \Vertex[x=\x, y=\y+0.0,L={(u_1,u_7)}]{u_15}
  \Vertex[x=\x, y=\y-3.0,L={(u_2,u_7)}]{u_25}
  \Vertex[x=\x, y=\y-6.0,L={(u_3,u_7)}]{u_35}
  \Vertex[x=\x, y=\y-9.0,L={(\tilde{y},u_7)}]{u_45}
  \Vertex[x=\x, y=\y-12.0,L={(u_8,u_7)}]{u_55}
\Vertex[x=\x, y=\y-15.0,L={(u_9,u_7)}]{u_65}
\Vertex[x=\x, y=\y-18.0,L={(u_{10},u_7)}]{u_75}

  \def\x{11.0}
  \Vertex[x=\x, y=\y+0.0,L={(u_1,\tilde{x}_2)}]{u_16}
  \Vertex[x=\x, y=\y-3.0,L={(u_2,\tilde{x}_2)}]{u_26}
  \Vertex[x=\x, y=\y-6.0,L={(u_3,\tilde{x}_2)}]{u_36}
  \Vertex[x=\x, y=\y-9.0,L={(\tilde{y},\tilde{x}_2)}]{u_46}
  \Vertex[x=\x, y=\y-12.0,L={(u_8,\tilde{x}_2)}]{u_56}
\Vertex[x=\x, y=\y-15.0,L={(u_9,\tilde{x}_2)}]{u_66}
\Vertex[x=\x, y=\y-18.0,L={(u_{10},\tilde{x}_2)}]{u_76}

%

\Edge[style={in=135, out=225},labelstyle={xshift=4pt, yshift=4pt},label = a](u_11)(u_21)
\Edge[style={in=135, out=225},label = a](u_11)(u_31)

\Edge[style={in=135, out=225},labelstyle={xshift=4pt, yshift=4pt},label = a](u_12)(u_22)
\Edge[style={in=135, out=225},label = a](u_12)(u_32)

\Edge[style={in=135, out=225},labelstyle={xshift=4pt, yshift=4pt},label = a](u_13)(u_23)
\Edge[style={in=135, out=225},label = a](u_13)(u_33)

\Edge[style={in=135, out=225},labelstyle={xshift=4pt, yshift=4pt},label = a](u_14)(u_24)
\Edge[style={in=135, out=225},label = a](u_14)(u_34)

\Edge[style={in=135, out=225},labelstyle={xshift=4pt, yshift=4pt},label = a](u_15)(u_25)
\Edge[style={in=135, out=225},label = a](u_15)(u_35)

\Edge[style={in=135, out=225},labelstyle={xshift=4pt, yshift=4pt},label = a](u_16)(u_26)
\Edge[style={in=135, out=225},label = a](u_16)(u_36)

\Edge[style={in=135, out=240},labelstyle={xshift=0pt, yshift=4pt},label = f](u_51)(u_71)
\Edge[style={in=45, out=-45},label = f](u_61)(u_71)

\Edge[style={in=135, out=240},labelstyle={xshift=0pt, yshift=4pt},label = f](u_52)(u_72)
\Edge[style={in=45, out=-45},label = f](u_62)(u_72)

\Edge[style={in=135, out=240},labelstyle={xshift=0pt, yshift=4pt},label = f](u_53)(u_73)
\Edge[style={in=45, out=-45},label = f](u_63)(u_73)

\Edge[style={in=135, out=240},labelstyle={xshift=0pt, yshift=4pt},label = f](u_54)(u_74)
\Edge[style={in=45, out=-45},label = f](u_64)(u_74)

\Edge[style={in=135, out=240},labelstyle={xshift=0pt, yshift=4pt},label = f](u_55)(u_75)
\Edge[style={in=45, out=-45},label = f](u_65)(u_75)

\Edge[style={in=135, out=240},labelstyle={xshift=0pt, yshift=4pt},label = f](u_56)(u_76)
\Edge[style={in=45, out=-45},label = f](u_66)(u_76)

\Edge[style={in=135, out=45},labelstyle={xshift=1pt, yshift=0pt},label = d](u_12)(u_14)
\Edge[style={in=225, out=315},labelstyle={xshift=1pt, yshift=0pt},label = d](u_13)(u_15)

\Edge[style={in=135, out=45},labelstyle={xshift=1pt, yshift=0pt},label = d](u_22)(u_24)
\Edge[style={in=225, out=315},labelstyle={xshift=1pt, yshift=0pt},label = d](u_23)(u_25)

\Edge[style={in=135, out=45},labelstyle={xshift=1pt, yshift=0pt},label = d](u_32)(u_34)
\Edge[style={in=225, out=315},labelstyle={xshift=1pt, yshift=0pt},label = d](u_33)(u_35)

\Edge[style={in=135, out=45},labelstyle={xshift=1pt, yshift=0pt},label = d](u_42)(u_44)
\Edge[style={in=225, out=315},labelstyle={xshift=1pt, yshift=0pt},label = d](u_43)(u_45)

\Edge[style={in=135, out=45},labelstyle={xshift=1pt, yshift=0pt},label = d](u_52)(u_54)
\Edge[style={in=225, out=315},labelstyle={xshift=1pt, yshift=0pt},label = d](u_53)(u_55)

\Edge[style={in=135, out=45},labelstyle={xshift=1pt, yshift=0pt},label = d](u_62)(u_64)
\Edge[style={in=225, out=315},labelstyle={xshift=1pt, yshift=0pt},label = d](u_63)(u_65)

\Edge[style={in=135, out=45},labelstyle={xshift=1pt, yshift=0pt},label = d](u_72)(u_74)
\Edge[style={in=225, out=315},labelstyle={xshift=1pt, yshift=0pt},label = d](u_73)(u_75)

\Edge[style={in=180, out=225},labelstyle={xshift=1pt, yshift=0pt},label = c](u_21)(u_42)
\Edge[style={in=165, out=285},labelstyle={xshift=1pt, yshift=0pt},label = c](u_21)(u_43)
\Edge[style={in=165, out=285},labelstyle={xshift=1pt, yshift=0pt},label = c](u_31)(u_42)
\Edge[style={in=165, out=320},labelstyle={xshift=-15pt, yshift=-14pt},label = c](u_31)(u_43)

\Edge[style={in=90, out=30},labelstyle={xshift=10pt, yshift=-26pt},label = c](u_44)(u_56)
\Edge[style={in=120, out=0},labelstyle={xshift=-10pt, yshift=10pt},label = c](u_44)(u_66)
\Edge[style={in=120, out=330},labelstyle={xshift=-4pt, yshift=4pt},label = c](u_45)(u_56)
\Edge[style={in=120, out=315},labelstyle={xshift=0pt, yshift=4pt},label = c](u_45)(u_66)

\Edge[style={in=90, out=310},labelstyle={xshift=-24pt, yshift=+14pt},label = c](u_24)(u_46)
\Edge[style={in=90, out=270},labelstyle={xshift=-10pt, yshift=14pt},label = c](u_25)(u_46)
\Edge[style={in=105, out=300},labelstyle={xshift=-0pt, yshift=4pt},label = c](u_34)(u_46)
\Edge[style={in=90, out=300},labelstyle={xshift=-4pt, yshift=4pt},label = c](u_35)(u_46)

\Edge[style={in=115, out=330},labelstyle={xshift=24pt, yshift=-34pt},label = c](u_41)(u_52)
\Edge[style={in=120, out=315},labelstyle={xshift=0pt, yshift=0pt},label = c](u_41)(u_62)
\Edge[style={in=135, out=330},labelstyle={xshift=-6pt, yshift=6pt},label = c](u_41)(u_53)
\Edge[style={in=105, out=330},labelstyle={xshift=-14pt, yshift=11pt},label = c](u_41)(u_63)

\Edge[style={in=70, out=140,out looseness=1.3, in looseness=3.75},labelstyle={xshift=-20pt, yshift=0pt},label = b](u_21)(u_56)
\Edge[style={in=-45, out=250,out looseness=3.8, in looseness=1.5},labelstyle={xshift=-20pt, yshift=0pt},label = b](u_31)(u_66)


  \def\x{-4.5+10.0}
  \def\y{-17.0-16.5}
\draw[circle, -,dotted, very thick,rounded corners=8pt] (\x+4.8,\y-0.5)-- (\x+5.9+0.75,\y-0.5)-- (\x+5.9+0.75,\y+8.5-0.5)-- (\x+1.7,\y+11)-- (\x-6.8,\y+11)-- (\x-8,\y+12.5)-- (\x-8,\y+19.5)-- (\x-11,\y+19.5)-- (\x-11,\y+11.0)-- (\x-6.5+1,\y+8.0)-- (\x+2.3,\y+8.0)-- (\x+3.8,\y+6.5)-- (\x+3.8,\y-0.5)-- (\x+4.8,\y-0.5);

  \def\x{-5.2}
  \def\y{-12.0}
\draw[circle, -,dashed, very thick,rounded corners=8pt] (\x+0.2,\y+2)--(\x+17.4,\y+2) --(\x+17.9,\y+1.5) -- (\x+17.9,\y-25.5)-- (\x+17.4,\y-26) -- (\x-0.3,\y-26) -- (\x-0.8,\y-25.5) -- (\x-0.8,\y+1.5) -- (\x-0.3,\y+2)--(\x+0.1,\y+2);

  \def\x{-18.0}
  \def\y{-5.25}
\draw[circle, -,dashed, very thick,rounded corners=8pt] (\x-0.5,\y+1)--(\x+1.8,\y+2)--(\x+2.4,\y+2) --(\x+2.9,\y+1.5) -- (\x+2.9,\y-1)-- (\x+2.4,\y-1.5) -- (\x+1.9,\y-1.5) -- (\x-0.7,\y-0.5)--(\x-0.7,\y+0.8)--(\x-0.5,\y+1);

  \def\x{-14.0}
  \def\y{-5.3}
\draw[circle, -,dashed, very thick,rounded corners=8pt] (\x+0.2,\y+2)--(\x+2.4,\y+2) --(\x+2.9,\y+1.5) -- (\x+2.9,\y+1-2)-- (\x+2.4,\y+0.5-2) -- (\x-0.3,\y+0.5-2) -- (\x-0.8,\y+1-2) -- (\x-0.8,\y+1.5) -- (\x-0.3,\y+2)--(\x+0.1,\y+2);

  \def\x{-10.0}
  \def\y{-5.3}
\draw[circle, -,dashed, very thick,rounded corners=8pt] (\x+0.2,\y+2)--(\x+4.4-2,\y+2-1) --(\x+4.9-2,\y+1.5-1) -- (\x+4.9-2,\y+1-1)-- (\x+4.4-2,\y+0.5-1) -- (\x-0.3,\y+0.5-2) -- (\x-0.8,\y+1-2) -- (\x-0.8,\y+1.5-2) -- (\x-0.8,\y+1.5) -- (\x-0.3,\y+2)--(\x+0.1,\y+2);

\end{tikzpicture}
}
\end{center}
\caption{Decomposition of $G\cong G/Y\boxbackslash G/X_1/X_2$. The set $Z$ from the proof of Theorem~\ref{theorem_4} and the graph isomorphic to $G$ induced by $Z$ in $G/Y\boxtimes G/X_1/X_2$ is indicated within the dotted region (apart from the arcs with label pair $b$ which are partially outside this region).}
  \label{Example1}
\end{figure}
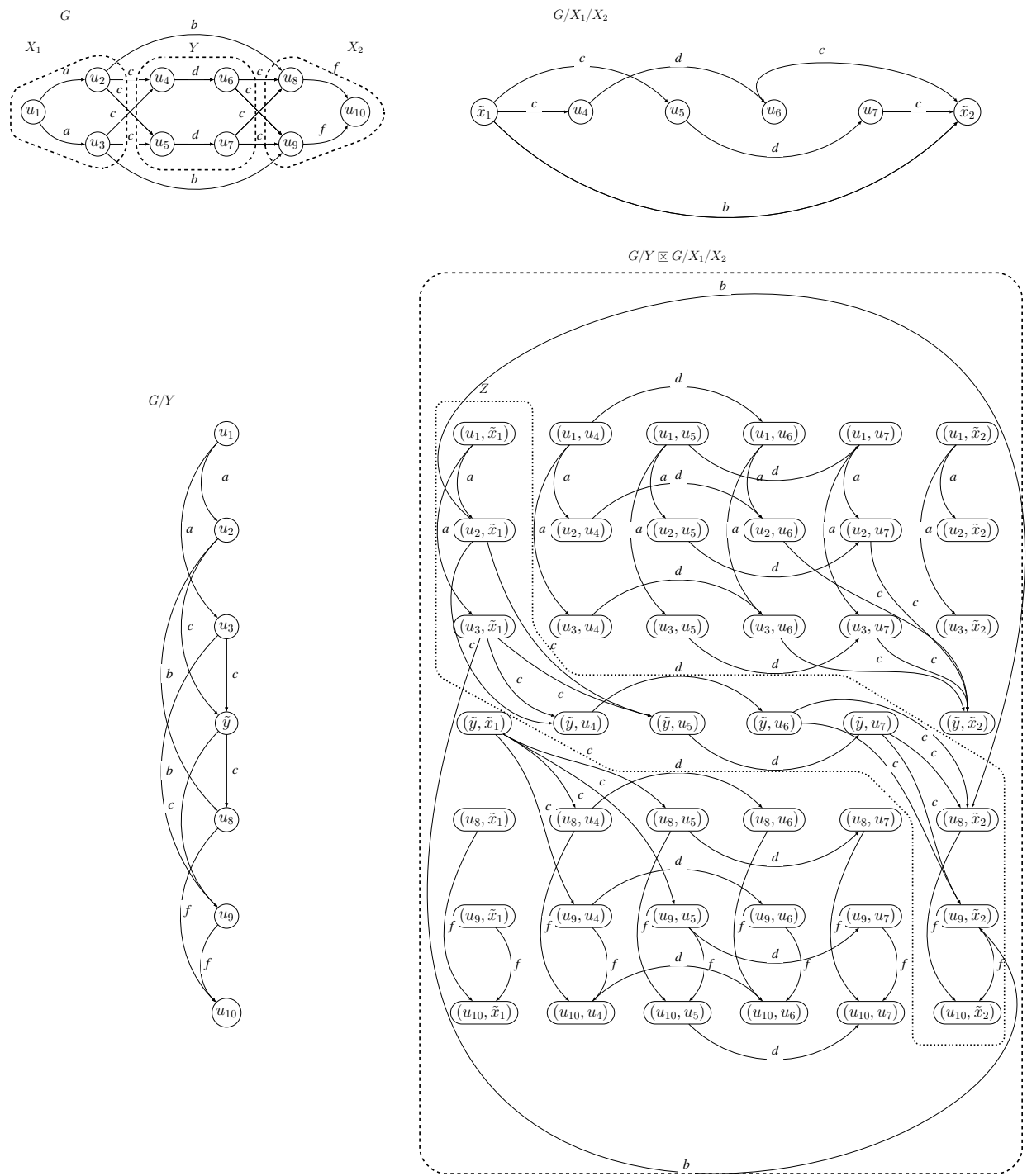
Before we can prove Theorem~\ref{theorem_3} and Theorem~\ref{theorem_4}, we state and prove in Lemma~\ref{lemma1} that a (not necessarily complete) bipartite graph $B(X,Y)$ consisting solely of complete bipartite subgraphs $B(X_i,Y_i), i=1,\ldots n,$ can be decomposed in such a manner that $B(X,Y)\cong  B(X,Y)/Y\boxbackslash B(X,Y)/X$, where $X_i\subseteq X, Y_i\subseteq Y$, all arcs of $B(X_i,Y_i)$ have the same label pair, all $[X_i,Y_i]$ have no backward arcs or all $[X_i,Y_i]$ have no forward arcs, and any pair of subgraphs $B(X_i,Y_i)$ and $B(X_j,Y_j), i\neq j,$ have different label pairs.
In Figure~\ref{BiPartiteExample1}, we give a simple example of the decomposition of a bipartite graph where all arcs have the same label pair.
Because all label pairs are identical, we have omitted these label pairs.
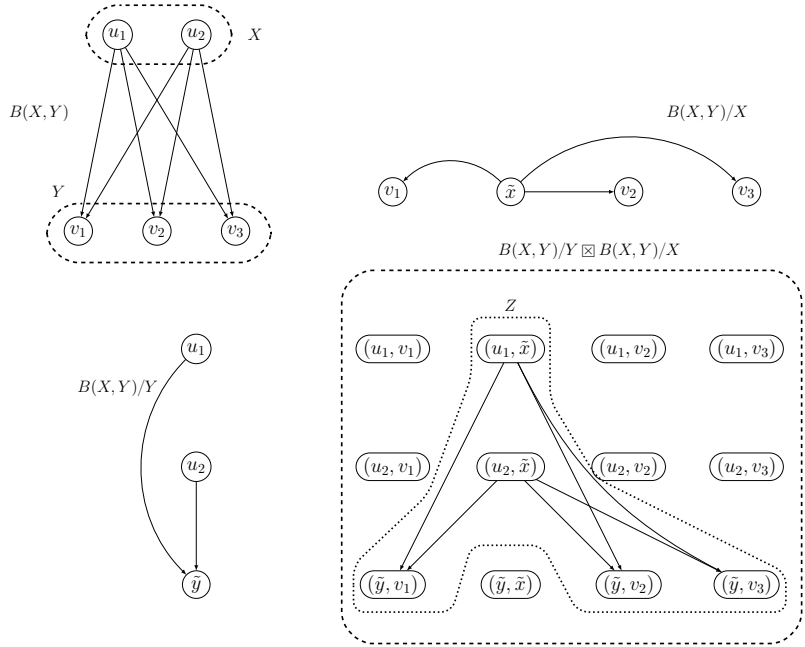
\begin{figure}[H]
\begin{center}
\resizebox{0.65\textwidth}{!}{
\begin{tikzpicture}[->,>=latex,shorten >=0pt,auto,node distance=2.5cm,
  main node/.style={circle,fill=blue!10,draw, font=\sffamily\Large\bfseries}]
  \tikzset{VertexStyle/.append style={
  font=\itshape\large, shape = circle,inner sep = 2pt, outer sep = 0pt,minimum size = 20 pt,draw}}
  \tikzset{EdgeStyle/.append style={thin}}
  \tikzset{LabelStyle/.append style={font = \itshape}}
  \SetVertexMath
  \def\x{0.0}
  \def\y{0.0}
\node at (\x-12.0,\y-6) {$B(X,Y)$};
\node at (\x-6.5,\y-4) {$X$};
\node at (\x-11.5,\y-8) {$Y$};
\node at (\x+5.0,\y-6) {$B(X,Y)/X$};
\node at (\x-10.0,\y-13) {$B(X,Y)/Y$};
\node at (\x+2.0,\y-9.5) {$B(X,Y)/Y\boxtimes B(X,Y)/X$};
\node at (\x-0.0,\y-10.9) {$Z$};
  \def\x{-10.0}
  \def\y{-6.0}
  \Vertex[x=\x-0, y=\y+2.0]{u_1}
  \Vertex[x=\x+2, y=\y+2.0]{u_2}
  \def\x{-9.0}
  \Vertex[x=\x-2, y=\y-3]{v_1}
  \Vertex[x=\x+0, y=\y-3]{v_2}
  \Vertex[x=\x+2, y=\y-3]{v_3}

  \Edge(u_1)(v_1) 
  \Edge(u_1)(v_2) 
  \Edge(u_1)(v_3) 
  \Edge(u_2)(v_1) 
  \Edge(u_2)(v_2) 
  \Edge(u_2)(v_3) 
  
  \def\x{-3.0}
  \def\y{-7.0}
  \Vertex[x=\x-0, y=\y-1]{v_1}
  \Vertex[x=\x+3, y=\y-1,L={\tilde{x}}]{u_1}
  \Vertex[x=\x+6, y=\y-1]{v_2}
  \Vertex[x=\x+9, y=\y-1]{v_3}

  \Edge[style={bend left=-45,min distance=1cm}](u_1)(v_1) 
  \Edge(u_1)(v_2) 
  \Edge[style={bend right=-45,min distance=1cm}](u_1)(v_3) 
  
  \def\x{-8.0}
  \def\y{-14.0}
  \Vertex[x=\x-0, y=\y+2.0]{u_1}
  \Vertex[x=\x+0, y=\y-1.0]{u_2}
  \Vertex[x=\x+0, y=\y-4,L={\tilde{y}}]{v_1}

  \Edge[style={bend left=-45,min distance=1cm}](u_1)(v_1) 
  \Edge(u_2)(v_1)

  \tikzset{VertexStyle/.append style={
  font=\itshape\large, shape = rounded rectangle, inner sep = 2pt, outer sep = 0pt,minimum size = 20 pt,draw}}

  \def\x{-3.0}
  \def\y{-12.0}
  \Vertex[x=\x, y=\y+0.0,L={(u_1,v_1)}]{u_11}
  \Vertex[x=\x+3, y=\y+0.0,L={(u_1,\tilde{x})}]{u_12}
  \Vertex[x=\x+6, y=\y+0.0,L={(u_1,v_2)}]{u_13}
  \Vertex[x=\x+9, y=\y+0.0,L={(u_1,v_3)}]{u_14}

  \def\x{-3.0}
  \def\y{-15.0}
  \Vertex[x=\x, y=\y+0.0,L={(u_2,v_1)}]{u_21}
  \Vertex[x=\x+3, y=\y+0.0,L={(u_2,\tilde{x})}]{u_22}
  \Vertex[x=\x+6, y=\y+0.0,L={(u_2,v_2)}]{u_23}
  \Vertex[x=\x+9, y=\y+0.0,L={(u_2,v_3)}]{u_24}

  \def\x{-3.0}
  \def\y{-18.0}
  \Vertex[x=\x, y=\y+0.0,L={(\tilde{y},v_1)}]{u_31}
  \Vertex[x=\x+3, y=\y+0.0,L={(\tilde{y},\tilde{x})}]{u_32}
  \Vertex[x=\x+6, y=\y+0.0,L={(\tilde{y},v_2)}]{u_33}
  \Vertex[x=\x+9, y=\y+0.0,L={(\tilde{y},v_3)}]{u_34}

  \Edge(u_12)(u_31) 
  \Edge(u_22)(u_31) 
  \Edge(u_12)(u_33) 
  \Edge(u_22)(u_33) 
  \Edge[style={bend left=-18,min distance=1cm}](u_12)(u_34) 
  \Edge(u_22)(u_34) 

  \def\x{-3.5}
  \def\y{-17.0}
\draw[circle, -,dotted, very thick,rounded corners=8pt] (\x-0.5,\y-1.0)-- (\x-0.5,\y-0.5)-- (\x+1.4,\y+1.3)-- (\x+2.5,\y+4.3)-- (\x+2.5,\y+5.8)-- (\x+4.5,\y+5.8)-- (\x+4.5,\y+3.7)-- (\x+5.7,\y+1.7)--(\x+10.5,\y-0.7)--(\x+10.5,\y-1.7)--(\x+5.2,\y-1.7)--(\x+4.3,\y+0.0)--(\x+2.5,\y+0.0)-- (\x+2.0,\y-1.7)-- (\x-0.5,\y-1.7)  --  (\x-0.5,\y-1.0);
  \def\x{-3.5}
  \def\y{-12.0}
\draw[circle, -,dashed, very thick,rounded corners=8pt] (\x+0.2,\y+2)--(\x+10.4,\y+2) --(\x+10.9,\y+1.5) -- (\x+10.9,\y-7)-- (\x+10.4,\y-7.5) -- (\x-0.3,\y-7.5) -- (\x-0.8,\y-7) -- (\x-0.8,\y+1.5) -- (\x-0.3,\y+2)--(\x+0.1,\y+2);

  \def\x{-10.0}
  \def\y{-5.25}
\draw[circle, -,dashed, very thick,rounded corners=8pt] (\x+0.2,\y+2)--(\x+2.4,\y+2) --(\x+2.9,\y+1.5) -- (\x+2.9,\y+1)-- (\x+2.4,\y+0.5) -- (\x-0.3,\y+0.5) -- (\x-0.8,\y+1) -- (\x-0.8,\y+1.5) -- (\x-0.3,\y+2)--(\x+0.1,\y+2);

  \def\x{-11.0}
  \def\y{-10.3}
\draw[circle, -,dashed, very thick,rounded corners=8pt] (\x+0.2,\y+2)--(\x+4.4,\y+2) --(\x+4.9,\y+1.5) -- (\x+4.9,\y+1)-- (\x+4.4,\y+0.5) -- (\x-0.3,\y+0.5) -- (\x-0.8,\y+1) -- (\x-0.8,\y+1.5) -- (\x-0.3,\y+2)--(\x+0.1,\y+2);

\end{tikzpicture}
}
\end{center}
\caption{Decomposition of $B(X,Y)\cong B(X,Y)/Y\boxbackslash B(X,Y)/X$. The set $Z$ from the proof of Lemma~\ref{lemma1} and the graph isomorphic to $B(X,Y)$ induced by $Z$ in $B(X,Y)/X\boxtimes B(X,Y)/Y$ is indicated within the dotted region. Because all label pairs are identical, we have omitted these label pairs.}
  \label{BiPartiteExample1}
\end{figure}
The decomposition given in Lemma~\ref{lemma1} is restricted to a clean bipartite graph.
Note that we allow parallel arcs with different label pairs in $B(X,Y)$.
Furthermore, note that $B(X,Y)$ is not necessarily weakly connected.

\begin{lemma}\label{lemma1}
Let $B(X,Y)$ be a clean bipartite graph.
Then $B(X,Y)\cong B(X,Y)/Y\boxbackslash B(X,Y)/X$.
\end{lemma}
\begin{proof}
It suffices to define a mapping $\phi: V(B(X,Y))\rightarrow V(B(X,Y)/Y\boxbackslash B(X,Y)/X)$ and to prove that $\phi$ is an isomorphism from $B(X,Y)$ to $B(X,Y)/Y\boxbackslash B(X,Y)/X$.
Let $\tilde{x}$ and $\tilde{y}$ be the new vertices replacing the sets $X$ and $Y$ when defining $B(X,Y)/X$ and $B(X,Y)/Y$, respectively. 
Consider the mapping $\phi: V(B(X,Y))\rightarrow V(B(X,Y)/Y\boxbackslash B(X,Y)/X)$ defined by  $\phi(u)=(u,\tilde{x})$ for all $u\in X$, and $\phi(v)=(\tilde{y},v)$ for all $v\in Y$.
Then $\phi$ is obviously a bijection if  $V(B(X,Y)/Y \boxbackslash B(X,Y)/X)=Z$, where $Z$ is defined as $Z=\{(u,\tilde{x}) \mid u\in X\}\cup \{(\tilde{y},v)\mid v\in Y\}$. We are going to show this later by arguing that all the other vertices of $B(X,Y)/Y\,\Box\, B(X,Y)/X$ will disappear from $B(X,Y)/Y\boxtimes B(X,Y)/X$. 
But first we are going to prove the following claim. 
\begin{claim}\label{claim2}
The subgraph of $B(X,Y)/Y\boxtimes B(X,Y)/X$ induced by $Z$ is isomorphic to $B(X,Y)$.
\end{claim}
\begin{proof}
Obviously, $\phi$ is a bijection from $V(B(X,Y))$ to $Z$.
It remains to show that this bijection preserves the arcs and their label pairs.
Let $X=\{u_1,\ldots,u_m\}, Y=\{v_1,\ldots,v_n\}$ be the disjoint vertex sets of a clean bipartite graph $B(X,Y)$.
Let $L=\{\lambda_1, \ldots, \lambda_x\}$ be the set of label pairs belonging to $B(X,Y)$.
Let all arcs of $A(B(X,Y))$ with label pair $\lambda_i$ arc-induce the clean bipartite subgraph $B(X_i,Y_i)$.
Then, $X=\overundercup{i=1}{x}X_i$ and $Y=\overundercup{i=1}{x}Y_i$.
Note that $X_i\cap X_j$ and $Y_i\cap Y_j, i\neq j$, are not necessarily empty sets and note that $B(X_i,Y_i)$ is complete.
Let $[X,Y]$ have no backward arcs. Hence, $[X_i,Y_i],i=1\ldots x,$ have no backward arcs.
Because, $X_i\subseteq X$ and $Y_i\subseteq Y$, and $\tilde{x}$ and $\tilde{y}$ are the new vertices replacing the sets $X$ and $Y$ when defining $B(X,Y)/X$ and $B(X,Y)/Y$, respectively, we have that $X_i$ and $Y_i$ (when defining $B(X_i,Y_i)/X_i$ and $B(X_i,Y_i)/Y_i$) are replaced by $\tilde{x}$ and $\tilde{y}$, respectively.

Now, we will prove that the subgraph of $B(X_i,Y_i)/Y_i\boxtimes B(X_i,Y_i)/X_i$ induced by $Z_i=\{(u,\tilde{x})\mid u\in X_i \cup \{\tilde{y},v)\mid v\in Y_i\}\subseteq Z$ is isomorphic to $B(X_i,Y_i)$.
Obviously, the mapping $\phi$ restricted to $V(B(X_i,Y_i))$ is a bijection from $V(B(X_i,Y_i))$ to $Z_i$.
It remains to show that this bijection preserves the arcs and their label pairs.
Let $X_i=\{u_{i_1},\ldots,u_{i_k}\}\subseteq X, Y=\{v_{i_1},\ldots,v_{i_l}\}\subseteq Y$ be the disjoint vertex sets of $B(X_i,Y_i)$.

$B(X_i,Y_i)$ is a clean bipartite graph, $B(X_i,Y_i)$ has the arc set $A_i=\{a\mid \mu(a)=(u_{i_s},v_{j_t}), a\in [X_i,Y_i]\}$ for $1\leq s\leq k$ and $1\leq t \leq l$, and $|A_i|=k\cdot l$.
Any two arcs $b$ with $\mu(b)=(u_{i_s},\tilde{y})$ in $B(X_i,Y_i)/Y_i$ and $c$ with $\mu(c)=(\tilde{x},v_{j_t})$ in $B(X_i,Y_i)/X_i$ are synchronising arcs, because $\lambda(b)=\lambda(c)$. 
Due to the VRSP, the arcs $b$ in $B(X_i,Y_i)/Y_i$ and $c$ in $B(X_i,Y_i)/X_i$ correspond to an arc $d$ with $\mu(d)=((u_{i_s},\tilde{x}),(\tilde{y},v_{j_t}))=(\phi(u_{i_s}),\phi(v_{j_t}))$ in $B(X_i,Y_i)/Y_i\boxtimes B(X_i,Y_i)/X_i$ with $\lambda(b)=\lambda(d)$.
Because the arc set $A_i=A(B(X_i,Y_i)/Y_i)=\{b\mid \mu(b)=(u_{i_s},\tilde{y})\}$ has cardinality $k$, the arc set $A(B(X_i,Y_i)/X_i)=\{c\mid \mu(c)=(\tilde{x},v_{j_t})\}$ has cardinality $l$ and all arcs of $A(B(X_i,Y_i)/Y_i)$ and $A(B(X_i,Y_i)/X_i)$ have identical label pairs, it follows that the arc set $A'_i=\{d\mid \mu(d)=((u_{i_s},\tilde{x}),(\tilde{y},v_{j_t}))=(\phi(u_{i_s}),\phi(v_{j_t})), 1\leq s\leq k, 1\leq t\leq l\}\subseteq A(B(X_i,Y_i)/Y_i\boxtimes B(X_i,Y_i)/X_i)$ has cardinality $k\cdot l$.
Furthermore, $\phi$ restricted to $V(B(X_i,Y_i))$ maps vertices $u_{i_s}$ and $v_{j_t}$ onto vertices $(u_{i_s},\tilde{x})$ and $(\tilde{y},v_{j_t})$, respectively, and therefore we have an arc $a$ with $\mu(a)=(u_{i_s},v_{j_t})$ in $B(X_i,Y_i)$ which corresponds to the arc $d$ with $\mu(d)=((u_{i_s},\tilde{x}),(\tilde{y},v_{j_t}))$ in $B(X_i,Y_i)/Y_i\boxtimes B(X_i,Y_i)/X_i$, with $\lambda(a)=\lambda(d)$. 
Together with $|A_i|=|A'_i|$, we have the one-to-one relationship between the arc $d$ in $B(X_i,Y_i)/Y_i\boxtimes B(X_i,Y_i)/X_i$ and the arc $a$ in $B(X_i,Y_i)$.
Therefore, because there are no other vertices in $Z_i$ than $(u_{i_s},\tilde{x})$ and $(\tilde{y},v_{j_t})$ and there are no other vertices in $B(X_i,Y_i)$ then $(u_{i_s},v_{j_t})$, the subgraph of $B(X_i,Y_i)/Y_i\boxtimes B(X_i,Y_i)/X_i$ arc-induced by the arcs of $B(X_i,Y_i)/Y_i\boxtimes B(X_i,Y_i)/X_i$ with label pair $\lambda_i$ is isomorphic to $B(X_i,Y_i)$.
This is valid for all $B(X_i,Y_i)$ because $ \lambda_i\neq \lambda_j, i\neq j,$ $\overundercup{i=1}{x}X_i=X$, $\overundercup{i=1}{x}Y_i=Y$ and $\overundercup{i=1}{x}Z_i=Z$.
Therefore,  we have that the subgraph of $B(X,Y)/Y\boxtimes  B(X,Y)/X$ induced by $Z$ is isomorphic to $B(X,Y)$.
This completes the proof of Claim~\ref{claim2}. 
\end{proof}

It remains to show that $\phi$ is a bijection from $V(B(X,Y))$ to $Z'=V(B(X,Y)/Y\boxbackslash B(X,Y)$ $/X)$.
Now, we have $Z'\subseteq V(B(X,Y)/Y\boxtimes B(X,Y)/X)=\{(u_i,v_j)\}\cup \{(u_i,\tilde{x})\}\cup\{(\tilde{y},v_j)\}\cup\{(\tilde{y},\tilde{x})\}$.
The arcs $b$ with $\mu(b)=(u_i,\tilde{x})$ in $B(X,Y)/Y$ and $c$ with $\mu(c)=(\tilde{y},v_j)$ in $B(X,Y)/X$ are synchronising arcs.
Therefore, the only vertices that are the tail of an arc in $B(X,Y)/Y\boxtimes B(X,Y)/X$ are $(u_i,\tilde{x})$ and the only vertices that are the head of an arc in $B(X,Y)/Y\boxtimes B(X,Y)/X$ are $(\tilde{y},v_j)$.
Next, the vertices $u_i$ in $B(X,Y)/Y$ and the vertex $\tilde{x}$ in $B(X,Y)/X$ have $level\,0$.
All other vertices in $B(X,Y)/Y$ and $B(X,Y)/X$ have $level\,1$.
Therefore, the only vertices in $B(X,Y)/Y\,\Box\, B(X,Y)$ $/X$ with $level\,0$ are the vertices $(u_i,\tilde{x})$.
It follows that the vertices $(u_i,v_j)$ and $(\tilde{y},\tilde{x})$ are removed from $V(B(X,Y)/Y\boxtimes B(X,Y)/X)$ because $level((u_i,v_j))>0$ in $B(X,Y)/Y\,\Box\, B(X,Y)/X$ but $level((u_i,v_j))=0$ in $B(X,Y)/Y\boxtimes B(X,Y)/X$ and $level((\tilde{y},\tilde{x}))$ $>0$ in $B(X,Y)/Y\,\Box\, B(X,Y)/$ $X$ but $level((\tilde{y}$, $\tilde{x}))=0$ in $B(X,Y)/Y\boxtimes B(X,Y)/X$.
Therefore, it follows that $Z'=\{(u_i,\tilde{x})\}\cup\{(\tilde{y},v_j)\}=Z$, for $1\leq i\leq m$ and $1\leq j \leq n$. 
Hence, $\phi$ is a bijection from $V(B(X,Y))$ to $Z$ preserving the arcs and their label pairs and therefore $B(X,Y)\cong B(X,Y)/Y\boxbackslash B(X,Y)/X$.
With similar arguments, it follows that $B(X,Y)\cong B(X,Y)/Y\boxbackslash B(X,Y)/X$ if $[X,Y]$ contains no forward arcs.
This completes the proof of Lemma~\ref{lemma1}.
\end{proof}
In Figure~\ref{BiPartiteExample2}, we give a bipartite graph where all arcs have identical label pairs which is not clean. 
For the arc $a$ with $\mu(a)=((u_1,\tilde{x}),(\tilde{y},v_1))$ in $B(X,Y)/Y\boxtimes B(X,Y)/X$ there is no arc $b$ with $\mu(b)=(u_1,v_1)$ in $B(X,Y)$.
Hence, $B(X,Y)\ncong B(X,Y)/Y\boxbackslash B(X,Y)/X$.
Therefore, we cannot relax the condition on the completeness of the bipartite graph without violating the conclusion of Lemma~\ref{lemma1}.

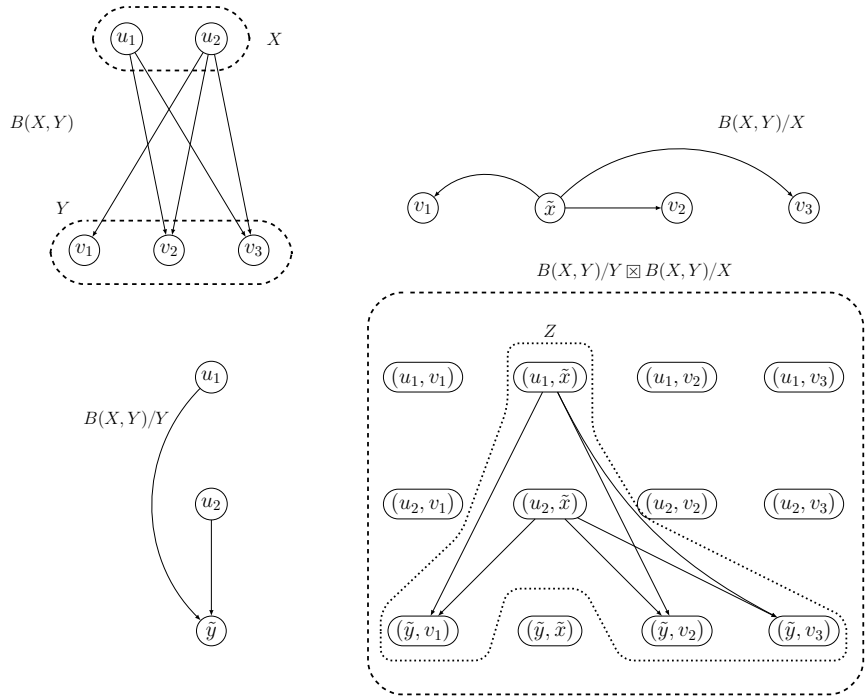
\begin{figure}[H]
\begin{center}
\resizebox{0.7\textwidth}{!}{
\begin{tikzpicture}[->,>=latex,shorten >=0pt,auto,node distance=2.5cm,
  main node/.style={circle,fill=blue!10,draw, font=\sffamily\Large\bfseries}]
  \tikzset{VertexStyle/.append style={
  font=\itshape\large, shape = circle,inner sep = 2pt, outer sep = 0pt,minimum size = 20 pt,draw}}
  \tikzset{EdgeStyle/.append style={thin}}
  \tikzset{LabelStyle/.append style={font = \itshape}}
  \SetVertexMath
  \def\x{0.0}
  \def\y{0.0}
\node at (\x-12.0,\y-6) {$B(X,Y)$};
\node at (\x-6.5,\y-4) {$X$};
\node at (\x-11.5,\y-8) {$Y$};
\node at (\x+5.0,\y-6) {$B(X,Y)/X$};
\node at (\x-10.0,\y-13) {$B(X,Y)/Y$};
\node at (\x+2.0,\y-9.5) {$B(X,Y)/Y\boxtimes B(X,Y)/X$};
\node at (\x-0.0,\y-10.9) {$Z$};
  \def\x{-10.0}
  \def\y{-6.0}
  \Vertex[x=\x-0, y=\y+2.0]{u_1}
  \Vertex[x=\x+2, y=\y+2.0]{u_2}
  \def\x{-9.0}
  \Vertex[x=\x-2, y=\y-3]{v_1}
  \Vertex[x=\x+0, y=\y-3]{v_2}
  \Vertex[x=\x+2, y=\y-3]{v_3}

  \Edge(u_1)(v_2) 
  \Edge(u_1)(v_3) 
  \Edge(u_2)(v_1) 
  \Edge(u_2)(v_2) 
  \Edge(u_2)(v_3) 
  
  \def\x{-3.0}
  \def\y{-7.0}
  \Vertex[x=\x-0, y=\y-1]{v_1}
  \Vertex[x=\x+3, y=\y-1,L={\tilde{x}}]{u_1}
  \Vertex[x=\x+6, y=\y-1]{v_2}
  \Vertex[x=\x+9, y=\y-1]{v_3}

  \Edge[style={bend left=-45,min distance=1cm}](u_1)(v_1) 
  \Edge(u_1)(v_2) 
  \Edge[style={bend right=-45,min distance=1cm}](u_1)(v_3) 
  
  \def\x{-8.0}
  \def\y{-14.0}
  \Vertex[x=\x-0, y=\y+2.0]{u_1}
  \Vertex[x=\x+0, y=\y-1.0]{u_2}
  \Vertex[x=\x+0, y=\y-4,L={\tilde{y}}]{v_1}

  \Edge[style={bend left=-45,min distance=1cm}](u_1)(v_1) 
  \Edge(u_2)(v_1) 
  
  \tikzset{VertexStyle/.append style={
  font=\itshape\large, shape = rounded rectangle, inner sep = 2pt, outer sep = 0pt,minimum size = 20 pt,draw}}

  \def\x{-3.0}
  \def\y{-12.0}
  \Vertex[x=\x, y=\y+0.0,L={(u_1,v_1)}]{u_11}
  \Vertex[x=\x+3, y=\y+0.0,L={(u_1,\tilde{x})}]{u_12}
  \Vertex[x=\x+6, y=\y+0.0,L={(u_1,v_2)}]{u_13}
  \Vertex[x=\x+9, y=\y+0.0,L={(u_1,v_3)}]{u_14}

  \def\x{-3.0}
  \def\y{-15.0}
  \Vertex[x=\x, y=\y+0.0,L={(u_2,v_1)}]{u_21}
  \Vertex[x=\x+3, y=\y+0.0,L={(u_2,\tilde{x})}]{u_22}
  \Vertex[x=\x+6, y=\y+0.0,L={(u_2,v_2)}]{u_23}
  \Vertex[x=\x+9, y=\y+0.0,L={(u_2,v_3)}]{u_24}

  \def\x{-3.0}
  \def\y{-18.0}
  \Vertex[x=\x, y=\y+0.0,L={(\tilde{y},v_1)}]{u_31}
  \Vertex[x=\x+3, y=\y+0.0,L={(\tilde{y},\tilde{x})}]{u_32}
  \Vertex[x=\x+6, y=\y+0.0,L={(\tilde{y},v_2)}]{u_33}
  \Vertex[x=\x+9, y=\y+0.0,L={(\tilde{y},v_3)}]{u_34}

  \Edge(u_12)(u_31) 
  \Edge(u_22)(u_31) 
  \Edge(u_12)(u_33) 
  \Edge(u_22)(u_33) 
  \Edge[style={bend left=-18,min distance=1cm}](u_12)(u_34) 
  \Edge(u_22)(u_34) 
  \def\x{-3.5}
  \def\y{-17.0}
\draw[circle, -,dotted, very thick,rounded corners=8pt] (\x-0.5,\y-1.0)-- (\x-0.5,\y-0.5)-- (\x+1.4,\y+1.3)-- (\x+2.5,\y+4.3)-- (\x+2.5,\y+5.8)-- (\x+4.5,\y+5.8)-- (\x+4.5,\y+3.7)-- (\x+5.7,\y+1.7)--(\x+10.5,\y-0.7)--(\x+10.5,\y-1.7)--(\x+5.2,\y-1.7)--(\x+4.3,\y+0.0)--(\x+2.5,\y+0.0)-- (\x+2.0,\y-1.7)-- (\x-0.5,\y-1.7)  --  (\x-0.5,\y-1.0);
  \def\x{-3.5}
  \def\y{-12.0}
\draw[circle, -,dashed, very thick,rounded corners=8pt] (\x+0.2,\y+2)--(\x+10.4,\y+2) --(\x+10.9,\y+1.5) -- (\x+10.9,\y-7)-- (\x+10.4,\y-7.5) -- (\x-0.3,\y-7.5) -- (\x-0.8,\y-7) -- (\x-0.8,\y+1.5) -- (\x-0.3,\y+2)--(\x+0.1,\y+2);

  \def\x{-10.0}
  \def\y{-5.25}
\draw[circle, -,dashed, very thick,rounded corners=8pt] (\x+0.2,\y+2)--(\x+2.4,\y+2) --(\x+2.9,\y+1.5) -- (\x+2.9,\y+1)-- (\x+2.4,\y+0.5) -- (\x-0.3,\y+0.5) -- (\x-0.8,\y+1) -- (\x-0.8,\y+1.5) -- (\x-0.3,\y+2)--(\x+0.1,\y+2);

  \def\x{-11.0}
  \def\y{-10.3}
\draw[circle, -,dashed, very thick,rounded corners=8pt] (\x+0.2,\y+2)--(\x+4.4,\y+2) --(\x+4.9,\y+1.5) -- (\x+4.9,\y+1)-- (\x+4.4,\y+0.5) -- (\x-0.3,\y+0.5) -- (\x-0.8,\y+1) -- (\x-0.8,\y+1.5) -- (\x-0.3,\y+2)--(\x+0.1,\y+2);

\end{tikzpicture}
}
\end{center}
\caption{Decomposition of $B(X,Y)$ for which $B(X,Y)\ncong B(X,Y)/Y\boxbackslash B(X,Y)/X$. Because all label pairs are identical, we have omitted these label pairs.}
  \label{BiPartiteExample2}
\end{figure}

Using Lemma~\ref{lemma1}, we relax Theorem~\ref{theorem_1} and Theorem~\ref{theorem_2} leading to Theorem~\ref{theorem_3} and Theorem~\ref{theorem_4}, respectively.
We assume that the graphs we want to decompose are connected; if not, we can apply our decomposition results to the components separately. 
In Figure~\ref{FirstCounterDecomposition}, we show the decomposition of a graph $G$ that contains a complete bipartite subgraph $B(Z_1,Z_2)$ where all arcs of $B(Z_1,Z_2)$ have the label pair $s$.

\begin{figure}[H]
\begin{center}
\resizebox{1\textwidth}{!}{
\begin{tikzpicture}[->,>=latex,shorten >=0pt,auto,node distance=2.5cm,
  main node/.style={circle,fill=blue!10,draw, font=\sffamily\Large\bfseries}]
  \tikzset{VertexStyle/.append style={
  font=\itshape\large,shape = circle,inner sep = 0pt, outer sep = 0pt,minimum size = 20 pt,draw}}
  \tikzset{EdgeStyle/.append style={thin}}
  \tikzset{LabelStyle/.append style={font = \itshape}}
  \SetVertexMath
  \def\x{-6.0}
  \def\y{0.0}
\node at (\x-1.0,\y) {$G$};
\node at (\x+1,\y+2.0) {$X$};
\node at (\x+5,\y+2.0) {$Y$};
\node at (\x+2,\y+0.0) {$Z_1$};
\node at (\x+4,\y+0.0) {$Z_2$};
  \def\x{4.0}
\node at (\x-0.5,\y+1.5) {$G/X$};
\node at (\x-9.5,\y-3.5) {$G/Y$};
\node at (\x-2+1,\y-3.0) {$G/Y\boxbackslash G/X$};
\node at (\x-1+1,\y-6.75) {$B(Z_1,Z_2)/Z_2\boxbackslash B(Z_1,Z_2)/Z_1$};
  \def\x{-6.0}
  \def\y{0.0}
  \Vertex[x=\x+0, y=\y+0.0]{u_1}
  \Vertex[x=\x+2, y=\y+1.0]{u_2}
  \Vertex[x=\x+4, y=\y+1.0]{u_3}
  \Vertex[x=\x+6, y=\y+0.0]{u_4}
  \Vertex[x=\x+2, y=\y-1.0]{u_5}
  \Vertex[x=\x+4, y=\y-1.0]{u_6}
  \Edge[label = a](u_1)(u_2) 
  \Edge[label = s](u_2)(u_3) 
  \Edge[label = c](u_3)(u_4) 
  \Edge[label = b](u_1)(u_5) 
  \Edge[label = s](u_5)(u_6) 
  \Edge[label = s](u_2)(u_6) 
  \Edge[label = s](u_5)(u_3) 
  \Edge[label = d](u_6)(u_4) 
  \Edge(u_2)(u_6) 
  \Edge(u_5)(u_3) 

  \def\x{3.0}
  \def\y{0.0}
  \Vertex[x=\x+0, y=\y+0.0,L={\tilde{x}}]{s_1}
  \Vertex[x=\x+2, y=\y+1.0]{u_3}
  \Vertex[x=\x+4, y=\y+0.0]{u_4}
  \Vertex[x=\x+2, y=\y-1.0]{u_6}
  \Edge[label = s](s_1)(u_3) 
  \Edge[label = c](u_3)(u_4) 
  \Edge[label = s](s_1)(u_6) 
  \Edge[label = d](u_6)(u_4) 

  \def\x{-6.0}
  \def\y{-5.0}
  \Vertex[x=\x+0, y=\y+0.0]{u_1}
  \Vertex[x=\x+2, y=\y+1.0]{u_2}
  \Vertex[x=\x+2, y=\y-1.0]{u_5}
  \Vertex[x=\x+4, y=\y+0.0,L={\tilde{y}}]{t}
  \Edge[label = a](u_1)(u_2) 
  \Edge[label = s](u_2)(t) 
  \Edge[label = b](u_1)(u_5) 
  \Edge[label = s](u_5)(t) 

  \Edge(u_1)(u_2) 
  \Edge(u_2)(t) 

\tikzset{VertexStyle/.append style={
  font=\itshape\large,shape = rounded rectangle,inner sep = 0pt, outer sep = 0pt,minimum size = 20 pt,draw}}

  \def\x{1.5+1}
  \def\y{-4.0}
  \Vertex[x=\x-2.0, y=\y-1.0,L={(u_1,\tilde{x})}]{u_1s_1}
  \Vertex[x=\x+0.0, y=\y-0.0,L={(u_2,\tilde{x})}]{u_2s_1}
  \Vertex[x=\x+0.0, y=\y-2.0,L={(u_5,\tilde{x})}]{u_5s_1}
  \Edge[label = a](u_1s_1)(u_2s_1)
  \Edge[label = b](u_1s_1)(u_5s_1)
  \def\x{5.5+1}
  \def\y{-3.0}
   \Vertex[x=\x-1.0, y=\y-3.0,L={(\tilde{y},u_6)}]{t_1u_6}

\def\x{5.5+1}
  \def\y{-1.0}
 \Vertex[x=\x-1.0, y=\y-3.0,L={(\tilde{y},u_3)}]{t_1u_3}

  \def\x{7.5+1}
  \def\y{-2.0}
  \Vertex[x=\x-1.0, y=\y-3.0,L={(\tilde{y},u_4)}]{t_1u_4}
  \Edge[label = s](u_2s_1)(t_1u_6)
  \Edge[label = s](u_5s_1)(t_1u_6)
  \Edge[label = s](u_5s_1)(t_1u_3)
  \Edge[label = s](u_2s_1)(t_1u_3)
  \Edge[label = d](t_1u_6)(t_1u_4)
  \Edge[label = c](t_1u_3)(t_1u_4)
  \Edge(u_2s_1)(t_1u_6)
  \Edge(u_5s_1)(t_1u_6)
  \Edge(u_5s_1)(t_1u_3)
  \Edge(u_2s_1)(t_1u_3)

  \def\x{-6.0}
  \def\y{-0.0}
\draw[circle, -,dashed, very thick,rounded corners=8pt] (\x-0.5,\y+0.5)--(\x-0.5,\y+1.7) --(\x+2.5,\y+1.7) -- (\x+2.5,\y-1.7) -- (\x-0.5,\y-1.7) --  (\x-0.5,\y+0.5);
  \def\x{-6.1}
  \def\y{-0.25}
\draw[circle, -,dotted, very thick,rounded corners=8pt] (\x+1.6,\y+0.5)--(\x+1.6,\y+1.7) --(\x+2.5,\y+1.7) -- (\x+2.5,\y-1.2) -- (\x+1.6,\y-1.2) --  (\x+1.6,\y+0.5);
  \def\x{-2.0}
  \def\y{-0.0}
\draw[circle, -,dashed, very thick,rounded corners=8pt] (\x-0.5,\y+0.5)--(\x-0.5,\y+1.7) --(\x+2.5,\y+1.7) -- (\x+2.5,\y-1.7) -- (\x-0.5,\y-1.7) --  (\x-0.5,\y+0.5);
  \def\x{-4.0}
  \def\y{-0.25}
\draw[circle, -,dotted, very thick,rounded corners=8pt] (\x+1.6,\y+0.5)--(\x+1.6,\y+1.7) --(\x+2.5,\y+1.7) -- (\x+2.5,\y-1.2) -- (\x+1.6,\y-1.2) --  (\x+1.6,\y+0.5);
  \def\x{-1.0+1}
  \def\y{-5.0}
\draw[circle, -,dashed, very thick,rounded corners=8pt] (\x-0.5,\y+0.5)--(\x-0.5,\y+2.4) --(\x+8.5,\y+2.4) -- (\x+8.5,\y-2.2) -- (\x-0.5,\y-2.2) --  (\x-0.5,\y+0.5);
  \def\x{-1.0+1}
  \def\y{-5.0}
\draw[circle, -,dotted, very thick,rounded corners=8pt] (\x+1.6,\y+0.5)--(\x+1.6,\y+1.6) --(\x+6.4,\y+1.6) -- (\x+6.4,\y-2.0) -- (\x+1.6,\y-2.0) --  (\x+1.6,\y+0.5);

\end{tikzpicture}
}
\end{center}
\caption{Decomposition of $G$ into $G/Y$ and $G/X$, where the arcs of $[X,Y]$ arc-induce a complete bipartite subgraph $B(Z_1,Z_2)$ of $G$ with arcs with the same label pair. The dashed regions indicate the vertex sets $X$, $Y$ and $V(G/Y\boxbackslash G/X)$. The dotted regions indicate the vertex sets $Z_1$, $Z_2$ and $V(B(Z_1,Z_2)/Z_2\boxbackslash B(Z_1,Z_2)/Z_1$.}
  \label{FirstCounterDecomposition}
\end{figure}
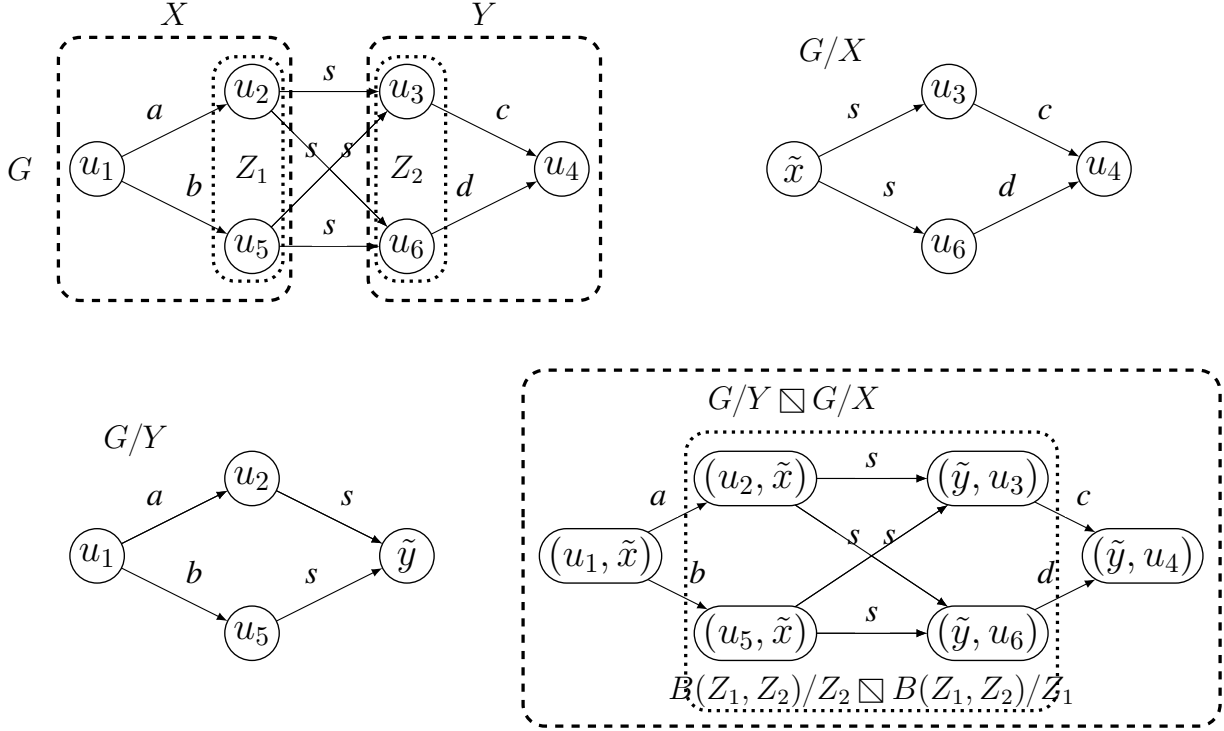
The only difference between Theorem~\ref{theorem_1} and Theorem~\ref{theorem_3} is that the arcs of $[X,Y]$ must have unique label pairs in Theorem~\ref{theorem_1}, whereas this is not required in Theorem~\ref{theorem_3}.
To relax this requirement of Theorem~\ref{theorem_1}, we require that any set of all arcs of $[X,Y]$ with identical label pairs must arc-induce a complete bipartite graph. 
By Lemma~\ref{lemma1}, these complete bipartite graphs are decomposable.
Then we have that all arcs of a complete bipartite subgraph $B(X_1,Y_1), X_1\subseteq X, Y_1\subseteq Y,$ of $G$ with the same label pair are synchronising arcs.
Furthermore, all other arcs of $G$ have label pairs different from the label pairs of $B(X_1,Y_1)$.
Therefore, Lemma~\ref{lemma1} together with Theorem~\ref{theorem_1} gives
$G\cong G/Y\boxbackslash G/X$, which we prove in Theorem~\ref{theorem_3}.
\begin{theorem}\label{theorem_3}
Let $G$ be a graph, let $X$ be a nonempty proper subset of $V(G)$, and let $Y=V(G)\setminus X$. 
Suppose that the graph $G\{[X,Y]\}$ is a clean bipartite subgraph of $G$ and that the arcs of $G/X$ and $G/Y$ corresponding to the arcs of $[X,Y]$ are the only synchronising arcs of $G/X$ and $G/Y$. 
If $S'(G)\subseteq X$ and $[X,Y]$ has no backward arcs, then $G\cong G/Y\boxbackslash G/X$.
\end{theorem}

\begin{proof}
It clearly suffices to define a mapping $\phi: V(G)\rightarrow V(G/Y\boxbackslash G/X)$ and to prove that $\phi$ is an isomorphism from $G$ to $G/Y\boxbackslash G/X$.

Let $\tilde{x}$ and $\tilde{y}$ be the new vertices replacing the sets $X$ and $Y$ when defining $G/X$ and $G/Y$, respectively. 
Consider the mapping $\phi: V(G)\rightarrow V(G/Y\boxbackslash G/X)$ defined by $\phi(u)=(u,\tilde{x})$ for all $u\in X$ and $\phi(v)=(\tilde{y},v)$ for all $v\in Y$.  
Then $\phi$ is obviously a bijection if  $V(G/Y\boxbackslash G/X)=Z$, where $Z$ is defined as $Z=\{(u,\tilde{x})\mid u\in X\}\cup \{(\tilde{y},v) \mid v\in Y\}$. We are going to show this later by arguing that all the other vertices of $G/Y\,\Box\, G/X$ will disappear from $G/Y\boxtimes G/X$. But first we are going to prove the following claim. 

\begin{claim}\label{claim3}
The subgraph of $G/Y\boxtimes G/X$ induced by $Z$ is isomorphic to $G$.
\end{claim}

\begin{proof}
Obviously, $\phi$ is a bijection from $V(G)$ to $Z$. It remains to show that this bijection preserves the arcs and their label pairs. 
By the definition of the Cartesian product, for each arc $a\in A(G)$ with $\mu(a)=(u,v)$ for $u\in X$ and $v\in X$, there exists an arc $b$ in $G/Y\boxtimes G/X$ with $\mu(b)=((u,\tilde{x}),(v,\tilde{x}))=(\phi(u),\phi(v))$ and $\lambda(b)=\lambda(a)$. 
This is because the arc $a\notin [X,Y]$, and hence $a$ is not a synchronising arc of $G/Y$ with respect to $G/X$ (by hypothesis).
Likewise, for each arc $a\in A(G)$ with $\mu(a)=(u,v)$ for $u\in Y$ and $v\in Y$, there exists an arc $b$ in $G/Y\boxtimes G/X$ with $\mu(b)=((\tilde{y},u),(\tilde{y},v))=(\phi(u),\phi(v))$ and $\lambda(b)=\lambda(a)$. 

Next, each arc $a\in A(G)$ with $\mu(a)=(u,v), u\in X$ and $v\in Y$, is an arc of $[X,Y]$.
Furthermore, all arcs in $[X,Y]$ with the same label pair arc-induce a clean  bipartite subgraph of $G$ (by hypothesis).
Then, by Lemma~\ref{lemma1}, for each arc $a\in [X,Y]$ with $\mu(a)=(u,v)$ there exists an arc $b$ with $\mu(b)=((u,\tilde{x}),(\tilde{y},v))=(\phi(u),\phi(v))$ and $\lambda(b)=\lambda(a)$.
Because the arcs of $[X,Y]$ are the only synchronising arcs we have the arc set $\{(u,\tilde{x})(\tilde{y},v)\mid u\in X, v\in Y\}$ in $G/X\boxtimes G/Y$.
Concluding, for each arc $a\in A(G)$ with $\mu(a)=(u,v), u,v\in V(G),$ there is an arc $b$ with $\mu(b)=((u,\tilde{x}),(\tilde{y},v))=(\phi(u),\phi(v)), (u,\tilde{x}),(\tilde{y},v)\in V(G/Y\boxtimes G/X)$ and $\lambda(b)=\lambda(a)$.
Hence, the subgraph of $G/Y\boxtimes G/X$ induced by $Z$ is isomorphic to $G$.
This completes the proof of Claim~\ref{claim3}. 
\end{proof}

We continue with the proof of Theorem~\ref{theorem_3}. It remains to show that all other vertices of $G/Y\,\Box\, G/X$, except for the vertices of $Z$, disappear from $G/Y\boxtimes G/X$. This is clear for the vertex $(\tilde{y},\tilde{x})$: all the arcs of $G/Y\,\Box\, G/X$ corresponding to the arcs of $[X,Y]$ are synchronising arcs of $G/Y$ and $G/X$, so they disappear from  $G/Y\boxtimes G/X$. Hence, $(\tilde{y},\tilde{x})$ has in-degree 0 (and out-degree 0) in $G/Y\boxtimes G/X$, while it has $level > 0$ in $G/Y\,\Box\, G/X$. For the other vertices, the argument is as follows.

The vertex set of $G/Y\,\Box\, G/X$ consists of $Z\cup\{(\tilde{y},\tilde{x})\}$ and the vertex set $X\times Y$. 
We will argue that all vertices of $X\times Y$ will eventually disappear from $G/Y\boxtimes G/X$.

Therefore, we claim that all $(u,v)\in X\times Y$ have $level >0$ in  $G/Y\,\Box\, G/X$. This is obvious if $u$ has $level >0$ in $G[X]$  or $v$ has  $level >0$ in  $G[Y]$.
Now, let $(u,v)\in X\times Y$ such that $u$ has $level\, 0$ in $G[X]$ and $v$ has  $level\, 0$ in  $G[Y]$.
Then the claim follows from the fact that $v$ has at least one in-arc from a vertex in $X$, since $S'(G)\subseteq X$.
Furthermore, since $v$ has only in-arcs from vertices in $X$ and $u$ has no in-arcs at all, $(u,v)$ has $level\, 0$ in  $G/Y\boxtimes G/X$.
This is because  all arcs $(u,v)\in A(G)$ are in $[X,Y]$, hence they correspond to synchronising arcs in $G/Y$ with respect to $G/X$.
Concluding, all vertices $(u,v)\in X\times Y$ such that $u$ has $level\, 0$ in $G[X]$ and $v$ has  $level\, 0$ in  $G[Y]$ disappear from $G/Y\boxtimes G/X$, together with all the arcs with tail $(u,v)$ for all such vertices $(u,v)\in X\times Y$.
If after this first step there are still vertices of $X\times Y$ left in $G/Y\boxtimes G/X$, we can repeat the above arguments step by step for such remaining vertices $(u,v)\in X\times Y$ for which $(u,v)$ has the lowest level in what has remained from $G/Y\boxtimes G/X$. Since $G/Y\boxtimes G/X$ is acyclic, it is clear that all vertices of  $X\times Y$ disappear one by one from $G/Y\boxtimes G/X$.  
This completes the proof of Theorem~\ref{theorem_3}. 
\end{proof}

We continue with the proof of Theorem~\ref{theorem_4} which relaxes the requirement of Theorem~\ref{theorem_2} that all the arcs of $[X_1,Y]$ have distinct label pairs, all the arcs of $[Y,X_2]$ have distinct label pairs and all the arcs of $[X_1,X_2]$ have distinct label pairs.
In Figure~\ref{Example1}, where the graph $G$ contains a non-trivial complete bipartite subgraph for which all arcs have identical label pairs, we have shown in a simple example how the graph $G$ can be decomposed into the graphs $G/Y$ and $G/X_1/X_2$ such that $G\cong G/Y\boxbackslash G/X_1/X_2$.
In Theorem~\ref{theorem_4}, we use the proof of Theorem~\ref{theorem_2} given in \cite{dam} and modify this proof to support complete bipartite subgraphs of $G$ with arcs in $[X_1,Y]$ with the same label pair and arcs in $[Y,X_2]$ with the same label pair and (not necessarily complete) bipartite subgraphs of $G$ with arcs in $[X_1,X_2]$ with the same label pair.

\begin{theorem}\label{theorem_4}
Let $G$ be a graph, and let $X_1$, $X_2$ and $Y=V(G)\setminus (X_1\cup X_2)$ be three disjoint nonempty subsets of $V(G)$. 
Suppose that the graph $G\{[X_1,Y]\}$ is a clean bipartite subgraph of $G$, the graph $G\{[Y,X_2]\}$ is a clean bipartite subgraph of $G$, the arcs of $[X_1,X_2]$ have no label pairs in common with any arc in $[X_1,Y]\cup[Y,X_2]$, and the arcs of $G/X_1/X_2$ and $G/Y$ corresponding to the arcs of $[X_1,Y]\cup [Y,X_2]\cup [X_1,X_2]$ are the only synchronising arcs of $G/X_1/X_2$ and $G/Y$. 
If $S'(G)\subseteq X_1$, and $[X_1,Y]$, $[Y,X_2]$ and $[X_1,X_2]$ have no backward arcs, then $G\cong G/Y\boxbackslash G/X_1/X_2$.
\end{theorem}
\begin{proof}
It suffices to define a mapping $\phi: V(G)\rightarrow V(G/Y\boxbackslash G/X_1/X_2)$ and to prove that $\phi$ is an isomorphism from $G$ to $G/Y\boxbackslash G/X_1/X_2$.

Let $\tilde{x}_1$, $\tilde{x}_2$ and $\tilde{y}$ be the new vertices replacing the sets $X_1$, $X_2$ and $Y$ when defining $G/X_1/X_2$ and $G/Y$, respectively. 
Consider the mapping $\phi: V(G)\rightarrow V(G/Y\boxbackslash G/X_1/X_2)$ defined by $\phi(u)=(u,\tilde{x}_1)$ for all $u\in X_1$, $\phi(v)=(v,\tilde{x}_2)$  for all $v\in X_2$ and $\phi(w)=(\tilde{y},w)$ for all $w\in Y$. 

\noindent 
Then $\phi$ is clearly a bijection if  $V(G/Y\boxbackslash G/X_1/X_2)=Z$, where $Z$ is defined as $Z=\{(u,\tilde{x}_1)\mid u\in X_1\}\cup\{(v,\tilde{x}_2)\mid v\in X_2\}\cup \{(\tilde{y},w) \mid w\in Y\}$. We are going to show this later by arguing that all the other vertices of $G/Y\,\Box\, G/X_1/X_2$ will disappear from $G/Y\boxtimes G/X_1/X_2$. But first we are going to prove the following claim. 

\begin{claim}\label{claim4}
The subgraph of $G/Y\boxtimes G/X_1/X_2$ induced by $Z$ is isomorphic to $G$.
\end{claim}

\begin{proof}
Obviously, $\phi$ is a bijection from $V(G)$ to $Z$. It remains to show that this bijection preserves the arcs and their label pairs. By the definition of the Cartesian product, for each arc $a\in A(G)$ with $\mu(a)=(u,v)$ for $u\in X_1$ and $v\in X_1$, there exists an arc $b$ in $G/Y\boxtimes G/X_1/X_2$ with $\mu(b)=((u,\tilde{x}_1),(v,\tilde{x}_1))=(\phi(u),\phi(v))$ and $\lambda(b)=\lambda(a)$. Likewise, for each arc $a\in A(G)$ with $\mu(a)=(u,v)$ for $u\in Y$ and $v\in Y$, there exists an arc $b$ in $G/Y\boxtimes G/X_1/X_2$ with $\mu(b)=((\tilde{y},u),(\tilde{y},v))=(\phi(u),\phi(v))$ and $\lambda(b)=\lambda(a)$, and for each arc $a\in A(G)$ with $\mu(a)=(u,v)$ for $u\in X_2$ and $v\in X_2$, there exists an arc $b$ in $G/Y\boxtimes G/X_1/X_2$ with $\mu(b)=((u,\tilde{x}_2),(v,\tilde{x}_2))=(\phi(u),\phi(v))$ and $\lambda(b)=\lambda(a)$.
Next, we distinguish two cases, the arcs of $[X_1,Y]$ and $[Y,X_2]$, and the arcs of $[X_1,X_2]$.

Firstly, consider the arcs of $[X_1,Y]$ and $[Y,X_2]$.
By hypothesis, the arcs with identical label pairs of $[X_1,Y]$ arc-induce a complete bipartite subgraph $B(Z_1,Z_2)$ of $G$ and the arcs with identical label pairs of $[Y,X_2]$ arc-induce a complete bipartite subgraph $B(Z_3,Z_4)$ of $G$.
Let $Z_1=\{u_1,\ldots, u_m\}\subseteq X_1$ and $Z_2=\{v_1,\ldots, v_n\}\subseteq Y$ and let $Z_3=\{u'_{1},\ldots, u'_{m'}\}\subseteq Y$ and $Z_4=\{v'_1,\ldots, v'_{n'}\}\subseteq X_2$. 
Let all arcs of $B(Z_1,Z_2)$ and $B(Z_3,Z_4)$ have the same label pair $\alpha$.

According to Lemma~\ref{lemma1}, $B(Z_1,Z_2)$ can be decomposed in $B(Z_1,Z_2)/Y$ and $B(Z_1,Z_2)/X_1$ with $B(Z_1,Z_2)\cong B(Z_1,Z_2)/Y\boxbackslash B(Z_1,Z_2)/X_1$ and $B(Z_3,Z_4)$ can be decomposed in $B(Z_3,Z_4)$ $/Y$ and $B(Z_3,Z_4)/X_2$ with $B(Z_3,Z_4)\cong B(Z_3,Z_4)/Y\boxbackslash B(Z_3,Z_4)/X_2$.
Note that $B(Z_1,Z_2)/X_1$ $=B(Z_1,Z_2)/X_1/$ $X_2$ because $V(B(Z_1,Z_2)/X_1)\cap X_2=\emptyset$ and $B(Z_3,Z_4)/X_2=B(Z_3,Z_4)/X_1/$ $X_2$ because $V(B(Z_3,Z_4)/X_2)\cap X_1=\emptyset$.
Furthermore, note that $B(Z_1,Z_2)$ and $B(Z_3,Z_4)$ do not have backward arcs.
For $B(Z_1,Z_2)/Y$ and $B(Z_1,Z_2)/X_1$, we have the arc sets $A(B(Z_1,Z_2)/Y)=\{a_i\mid \mu(a_i)=(u_i,\tilde{y}),i=1,\ldots,m\}$ and $A(B(Z_1,Z_2)/X_1)=\{b_j\mid \mu(b_j)=(\tilde{x}_1,v_j),j=1,$ $\ldots, n\}$, respectively and for $B(Z_3,Z_4)/Y$ and $B(Z_3,Z_4)$ $/X_2$, we have the arc sets $A(B(Z_3,Z_4)$ $/Y)=\{c_i\mid \mu(c_i)=(\tilde{y}, v'_i),i=1,\ldots, n'\}$ and $A(B(Z_3,Z_4)$ $/X_2)=\{d_j\mid \mu(d_j)=(u'_j,\tilde{x}_2),j=1,\ldots, m'\}$, respectively.
Because these arcs are the only arcs synchronising over label pair $\alpha$, we have the arc set $\{e_{i,j}\mid \mu(e_{i,j})=((u_i,\tilde{x}_1),(\tilde{y},v_j)), i=1,\ldots,m,j=1,\ldots,n\}\cup \{f_{j',j}\mid \mu(f_{j',j})=((\tilde{y},\tilde{x}_1),(v'_{j'},v_j)), j=1,\ldots,n$, $j'=1,\ldots,n'\}\cup \{g_{i,i'}\mid \mu(g_{i,i'})=((u_i,u'_{i'}),(\tilde{y},\tilde{x}_2)), i=1,\ldots,m,$ $i'=1,\ldots,m'\}\cup \{h_{i',j'}\mid \mu(h_{i',j'})=((\tilde{y},u'_{i'}),(v'_{j'},\tilde{x}_2)), i'=1,\ldots,m',j'=1,\ldots,n'\} $ in $G/Y\boxtimes G/X$.
Therefore, for each arc $a\in A(G)$ with $\mu(a)=(u_i,v_j)$ for $u_i\in Z_1$ and $v_j\in Z_2$, there exists an arc $b\in G/Y\boxtimes G/X_1/X_2$ with $\mu(b)=((u_i,\tilde{x}_1)(\tilde{y},v_j))=(\phi(u_i),\phi(v_j))$ and $\lambda(b)=\lambda(a)$, and for each arc $c\in A(G)$ with $\mu(c)=(u'_{i'},v'_{j'})$ for $u'_{i'}\in Z_3$ and $v'_{j'}\in Z_4$, there exists an arc $d\in G/Y\boxtimes G/X_1/X_2$ with $\mu(d)=((\tilde{y},u'_{i'})(v'_{j'},\tilde{x}_2))=(\phi(u'_{i'}),\phi(v'_{j'}))$ and $\lambda(c)=\lambda(a)$.

It is sufficient to prove the preservation of the arcs with the same label pair for $B(Z_1,Z_2)$ and $B(Z_3,Z_4)$.
If $B(Z_3,Z_4)$ does not exist, we do not have the subgraphs $B(Z_1,Z_2)/Y\boxbackslash B(Z_3,Z_4)$ $/X_2, B(Z_3,Z_4)/Y\boxbackslash B(Z_1,Z_2)/X_1$ and $B(Z_3,Z_4)/Y\boxbackslash B(Z_3,Z_4)/X_2$ of $G/Y\boxbackslash G/X_1/X_2$ and if $B(Z_1,Z_2)$ does not exist, we do not have the subgraphs $B(Z_1,Z_2)/Y\boxbackslash B(Z_3,Z_4)/X_2$, $B(Z_3,Z_4)$ $/Y\boxbackslash B(Z_1,Z_2)/X_1$ and $B(Z_1,Z_2)/Y\boxbackslash B(Z_1,Z_2)/X_1$ of $G/Y\boxbackslash G/X_1/X_2$.
Therefore, this observation reduces the proof for $B(Z_1,Z_2)$ and $B(Z_3,Z_4)$ with arcs with identical label pairs to the proof for $B(Z_1,Z_2)$ with arcs with identical label pairs and the proof for $B(Z_3,Z_4)$ with arcs with identical label pairs.

Secondly, let $Z_1\subseteq X_1$ and $Z_2\subseteq X_2$.
Let $B(Z_1,Z_2)$ be a bipartite subgraph of $G$ with vertex sets $Z_1=\{u_1,\ldots, u_m\}\subseteq X_1$ and $Z_2=\{v_1,\ldots, v_n\}\subseteq X_2$ where each arc $a\in A(B(Z_1,Z_2))$ has the same label pair $\alpha$.
Then the contraction $G/Y$ will leave all arcs $a$ of $B(Z_1,Z_2)$ with $\mu(a)=(u_i,v_j)$ and $\lambda(a)=\alpha$  unchanged, therefore these arcs $a$ correspond to arcs $b$ of $B(Z_1,Z_2)/Y$ with $\mu(b)=(u_i,v_j)$ and $\lambda(b)=\lambda(a)$. 
The contraction $G/X_1/X_2$ will replace all vertices $u_i$ of $X_1$ by one vertex $\tilde{x}_1$ and all vertices $v_j$ of $X_2$ by one vertex $\tilde{x}_2$, and therefore, all the arcs $a$ of $B(Z_1,Z_2)$ with $\mu(a)=(u_i,v_j)$ and $\lambda(a)=\alpha$ are replaced by one arc $c$ with $\mu(c)=(\tilde{x}_1,\tilde{x}_2)$ and $\lambda(c)=\lambda(a)$ of $G/X_1/X_2$.
Because all arcs $b$ of $B(Z_1,Z_2)\subseteq G/Y$ are synchronous arcs with respect to the arc $c$ of $G/X_1/X_2$, we have that each pair of arcs $b$ and $c$ correspond with an arc $d$ of $B(Z_1,Z_2)/Y\boxbackslash B(Z_1,Z_2)/X_1/X_2$ with $\mu(d)=((u_i,\tilde{x}_1),(v_j,\tilde{x}_2))$ and $\lambda(d)=\lambda(a)$. 
Since there are no backward arcs in $[X_1,Y]$, $[Y,X_2]$ and $[X_1,X_2]$, the above arcs are the only arcs in $G/Y\boxtimes G/X_1/X_2$ induced by the vertices of $Z$.
The proof in case of $B(Z_3,Z_4)$ is similar.
This completes the proof of Claim~\ref{claim4}. 
\end{proof}
We continue with the proof of Theorem~\ref{theorem_4}. It remains to show that all other vertices of $G/Y\boxtimes G/X_1/X_2$, except for the vertices of $Z$, disappear from $G/Y\boxtimes G/X_1/X_2$. 
This is clear for the vertex $(\tilde{y},\tilde{x}_1)$: all the arcs of $G/Y\,\Box\, G/X_1/X_2$ corresponding to the arcs of $[X_1,Y]$ are synchronising arcs of $G/Y$ and $G/X_1/X_2$, so they disappear from  $G/Y\boxtimes G/X_1/X_2$. 
Hence, $(\tilde{y},\tilde{x}_1)$ has in-degree 0 in $G/Y\boxtimes G/X_1/X_2$, while it has $level > 0$ in $G/Y\,\Box\, G/X_1/X_2$. 
For the other vertices, the argument is as follows.

The vertex set of $G/Y\,\Box\, G/X_1/X_2$ consists of the union of $Z\cup\{(\tilde{y},\tilde{x}_1),(\tilde{y},\tilde{x}_2)\}$ and the vertex sets $(X_1\cup X_2)\times Y$,  $X_1\times \{\tilde{x}_2\}$ and  $X_2\times \{\tilde{x}_1\}$. 
We will argue that all vertices of $(X_1\cup X_2)\times Y$,  $X_1\times \{\tilde{x}_2\}$ and $X_2\times \{\tilde{x}_1\}$, as well as the vertex  $(\tilde{y},\tilde{x}_2)$ will eventually disappear from $G/Y\boxtimes G/X_1/X_2$.

Firstly, we claim that all $(u,v)\in X_1\times Y$ have $level >0$ in  $G/Y\,\Box\, G/X_1/X_2$. This is obvious if $u$ has $level >0$ in $G[X_1]$  or $v$ has  $level >0$ in  $G[Y]$.
Now let $(u,v)\in X_1\times Y$ such that $u$ has $level\, 0$ in $G[X_1]$ and $v$ has  $level\, 0$ in  $G[Y]$.
Then the claim follows from the fact that $v$ has at least one in-arc from a vertex in $X_1$, since $S'(G)\subseteq X_1$.
Furthermore, since $v$ has only in-arcs from vertices in $X_1$ and $u$ has no in-arcs at all, $(u,v)$ has $level\, 0$ in  $G/Y\boxtimes G/X_1/X_2$.
Hence, all vertices $(u,v)\in X_1\times Y$ such that $u$ has $level\, 0$ in $G[X_1]$ and $v$ has  $level\, 0$ in  $G[Y]$ disappear from $G/Y\boxtimes G/X_1/X_2$, together with all the arcs with tail $(u,v)$ for all such vertices $(u,v)\in X_1\times Y$.
If after this first step there are still vertices of $X_1\times Y$ left in $G/Y\boxtimes G/X_1/X_2$, we can repeat the above arguments step by step for such remaining vertices $(u,v)\in X_1\times Y$ for which $(u,v)$ has the lowest level in what has remained from $G/Y\boxtimes G/X_1/X_2$. Since $G/Y\boxtimes G/X_1/X_2$ is acyclic, it is clear that all vertices of  $X_1\times Y$ disappear one by one from $G/Y\boxtimes G/X_1/X_2$. 
Now, since $(\tilde{y},\tilde{x}_2)$ has possibly only in-arcs from vertices $(u,v)\in X_1\times Y$, $(\tilde{y},\tilde{x}_2)$ will disappear as well.

Next, we claim that all $(u,v)\in X_2\times Y$ have $level >0$ in  $G/Y\,\Box\, G/X_1/X_2$. This is obvious if $u$ has $level >0$ in $G[X_2]$  or $v$ has  $level >0$ in  $G[Y]$.
Now let $(u,v)\in X_2\times Y$ such that $u$ has $level\, 0$ in $G[X_2]$ and $v$ has  $level\, 0$ in  $G[Y]$.
Then the claim follows from the fact that $u$ has at least one in-arc from a vertex in $Y$, since $[Y,X_2]$ has only forward arcs.
Furthermore, since $u$ has only in-arcs from vertices in $Y$ and $v$ has no in-arcs at all, $(u,v)$ has $level\, 0$ in  $G/Y\boxtimes G/X_1/X_2$.
Hence, all vertices $(u,v)\in X_2\times Y$ such that $u$ has $level\, 0$ in $G[X_2]$ and $v$ has  $level\, 0$ in  $G[Y]$ disappear from $G/Y\boxtimes G/X_1/X_2$, together with all the arcs with tail $(u,v)$ for all such vertices $(u,v)\in X_2\times Y$.
If after this first step there are still vertices of $X_2\times Y$ left in $G/Y\boxtimes G/X_1/X_2$, we can repeat the above arguments step by step for such remaining vertices $(u,v)\in X_2\times Y$ for which $(u,v)$ has the lowest level in what has remained from $G/Y\boxtimes G/X_1/X_2$. Since $G/Y\boxtimes G/X_1/X_2$ is acyclic, it is clear that all vertices of  $X_2\times Y$ disappear one by one from $G/Y\boxtimes G/X_1/X_2$. 

We continue with the claim that all $(u,\tilde{x}_1)\in X_2\times \{\tilde{x}_1\}$ have $level >0$ in  $G/Y\,\Box\, G/X_1/X_2$. This is obvious if $u$ has $level >0$ in $G[X_2]$.
Now let $(u,\tilde{x}_1)\in X_2\times \{\tilde{x}_1\}$ such that $u$ has $level\, 0$ in $G[X_2]$.
Then the claim follows from the fact that $u$ has at least one in-arc from a vertex in $Y$, since $[Y,X_2]$ has only forward arcs.
Furthermore, since $u$ has only in-arcs from vertices in $Y$ and $\tilde{x}_1$ has no in-arcs at all, $(u,\tilde{x}_1)$ has $level\, 0$ in  $G/Y\boxtimes G/X_1/X_2$.
Hence, all vertices $(u,\tilde{x}_1)\in X_2\times \{\tilde{x}_1\}$ such that $u$ has $level\, 0$ in $G[X_2]$  disappear from $G/Y\boxtimes G/X_1/X_2$, together with all the arcs with tail $(u,\tilde{x}_1)$ for all such vertices $(u,\tilde{x}_1)\in X_2\times \{\tilde{x}_1\}$.
If after this first step there are still vertices of $X_2\times \{\tilde{x}_1\}$ left in $G/Y\boxtimes G/X_1/X_2$, we can repeat the above arguments step by step for such remaining vertices $(u,\tilde{x}_1)\in X_2\times \{\tilde{x}_1\}$ for which $(u,\tilde{x}_1)$ has the lowest level in what has remained from $G/Y\boxtimes G/X_1/X_2$. Since $G/Y\boxtimes G/X_1/X_2$ is acyclic, it is clear that all vertices of  $X_2\times \{\tilde{x}_1\}$ disappear one by one from $G/Y\boxtimes G/X_1/X_2$. 

Finally, we claim that all $(u,\tilde{x}_2)\in X_1\times \{\tilde{x}_2\}$ have $level >0$ in  $G/Y\,\Box\, G/X_1/X_2$. This is obvious if $u$ has $level >0$ in $G[X_1]$. 
Now let $(u,\tilde{x}_2)\in X_1\times \{\tilde{x}_2\}$ such that $u$ has $level\, 0$ in $G[X_1]$.
Then the claim follows from the fact that $\tilde{x}_2$ has at least one in-arc from a vertex in $Y$, since $[Y,X_2]$ has only forward arcs and $S'(G) \subseteq X_1$ by hypothesis.
Noting that $\tilde{x}_2$ has only in-arcs from vertices in $Y$, and all $u\in S'(G)\subseteq X_1$ have no in-arcs at all, clearly for all $u\in S'(G)\subseteq X_1$, $(u,\tilde{x}_2)$ has $level\, 0$ in  $G/Y\boxtimes G/X_1/X_2$.
Hence, all vertices $(u,\tilde{x}_2)\in X_1\times \{\tilde{x}_2\}$ such that $u$ has $level\, 0$ in $G[X_1]$ disappear from $G/Y\boxtimes G/X_1/X_2$, together with all the arcs with tail $(u,\tilde{x}_2)$ for all such vertices $(u,\tilde{x}_2)\in X_1\times \{\tilde{x}_2\}$. 
If after this first step there are still vertices of $X_1\times \{\tilde{x}_2\}$ left in $G/Y\boxtimes G/X_1/X_2$, we can repeat the above arguments step by step for such remaining vertices $(u,\tilde{x}_2)\in X_1\times \{\tilde{x}_2\}$ for which $(u,\tilde{x}_2)$ has the lowest level in what has remained from $G/Y\boxtimes G/X_1/X_2$. Since $G/Y\boxtimes G/X_1/X_2$ is acyclic, it is clear that all vertices of  $X_1\times \{\tilde{x}_2\}$ disappear one by one from $G/Y\boxtimes G/X_1/X_2$. 
This completes the proof of Theorem~\ref{theorem_4}. 
\end{proof}

\section{Future work}
The ultimate purpose is to create a set of decomposition theorems that, when applied to an edge-labelled acyclic directed multigraphs, will result in a set of graphs that can not be decomposed anymore using the VRSP. 
As an example, a graph $G$ that has the property that $G\cong G_1\,\Box\, G_2\cong G_1\boxbackslash G_2$ for two subgraphs $G_1,G_2$ of $G$ can not be decomposed by the theorems we have presented so far. 
Therefore, in future contributions, we will present theorems by which we can decompose graphs that contain subgraphs that have this \enquote{Cartesian} characteristic.


\begin{thebibliography}{10}


\bibitem{Katoen}
C. Baier and J.~P.~Katoen. \emph{Principles of Model Checking}. MIT Press, ISBN 978-0-262-02649-9 (2008).

\bibitem{GraphTheory}
J.~A. Bondy and U.~S.~R. Murty.
\newblock {\em Graph Theory}.
\newblock Springer, Berlin (2008).

\bibitem{boodethesis}
 A.~H. {Boode}.
\newblock On the Automation of Periodic Hard Real-Time Processes: A Graph-Theoretical Approach. PhD thesis, University of Twente (2018). DOI: 10.3990/1.9789036545518

 
\bibitem{boode2014cpa}
A.~H. {Boode} and J.~F. {Broenink}.
\newblock Performance of periodic real-time processes: a vertex-removing
  synchronised graph product.
\newblock In {\em Communicating Process Architectures 2014, Oxford, UK}, 36th
  WoTUG conference on concurrent and parallel programming, pages 119--138,
  Bicester, August 2014. Open Channel Publishing Ltd.

\bibitem{dam}
A.~H. {Boode}, and H.~J. {Broersma}.
\newblock
Decompositions of graphs based on a new graph product.
{\em Discrete Applied Mathematics.} {\bf 259} (2019) 31--40.

\bibitem{boode2013cpa}
A.~H. {Boode}, H.~J. {Broersma}, and J.~F. {Broenink}.
\newblock Improving the performance of periodic real-time processes: a graph
  theoretical approach.
\newblock In {\em Communicating Process Architectures 2013, Edinburgh, UK},
  35th WoTUG conference on concurrent and parallel programming, pages 57--79,
  Bicester, August 2013. Open Channel Publishing Ltd.

\bibitem{ejgta}
A.~H. {Boode}, H.~J. {Broersma}, and J.~F. {Broenink}.
\newblock
On a directed tree problem motivated by a newly introduced graph product.
{\em Electr.\ J.\ of Graph Theory and Appl.} {\bf 3} (2015) 162--181.

\bibitem{Grootte}
J.F. Groote and M.R. Mousavi. 
\newblock
\emph{Modeling and analysis of communicating systems}. MIT Press, ISBN 978-0-262-02771-7 (2014).

\bibitem{Hoare}
C.~A.~R. Hoare. Communicating sequential processes. \emph{Commun. ACM}
  \textbf{21} (1978) 666--677.


\bibitem{Magee:2000:CSM:332036}
J. Magee and J.~Kramer. \emph{Concurrency: State Models \& Java
  Programs}. John Wiley \& Sons, Inc., New York, NY, USA, ISBN 0-471-98710-7 (1999).
  

\bibitem{Milner}
R. Milner. \emph{Communication and Concurrency}. Prentice-Hall, Inc.,
  Upper Saddle River, NJ, USA, ISBN 0-13-115007-3 (1989).


\bibitem{Wohrle04modelchecking}
S. W\"ohrle and W. Thomas.
\newblock Model checking synchronized products of infinite transition systems.
\newblock In {\em in: Proc. 19th LICS, IEEE Comp. Soc}, IEEE
  Computer Society Press (2004) 2--11.
  

  
\end{thebibliography}
\end{document}